\documentclass[12pt,final,reqno]{amsart}
\usepackage{ntung}

\usepackage{ntung_commands}

\newcommand{\T}{\mathbb{T}}

\newcommand{\ep}[1]{\mathrm{e}_N\!\left(#1\right)}
\newcommand{\normT}[1]{\left\lVert #1 \right\rVert_{\T}}

\begin{document}

\title{Linear equations and chromatic thresholds in $B_h$ sets}
\author{Nathan Tung}
\address{Stanford University, CA 94305, USA}
\email{ntung@stanford.edu}

\begin{abstract}
We derive sparse analogs of several Roth-type results, showing that they hold in $B_h$ sets of near-maximum size. It is shown that if a $B_h$ set is free of pairwise distinct solutions to a linear equation with more than $2h$ variables then it must be a constant factor smaller than the best-known upper bound on the size of any $B_h$ set. As a key input, it is established that extremal $B_h$ sets are Fourier pseudorandom. If the forbidden equation has a certain subdivision structure, an asymptotic saving is obtained. The case of Sidon sets ($h=2$) was previously studied by Conlon, Fox, Sudakov, and Zhao as well as Prendiville.

When forbidding a non-translation-invariant equation $E$ from a Sidon set, it is shown that if $E$ has a zero-sum subcollection of at least five coefficients then the Sidon set must either be very small or generate a Cayley graph with bounded chromatic number. On the other hand, large Sidon sets are constructed that generate Cayley graphs with unbounded chromatic number and are also free of multiple equations with zero-sum subcollections of four coefficients. This can be viewed as a sparse analog of a result of Liu, Wu, Yang, and Zhang characterizing linear equations with vanishing chromatic threshold.
\end{abstract}
\maketitle

\section{Introduction}

Call a subset $S \subset \ZZ_N$ of the cyclic group a \textit{$B_h$ set} if
    $$
    x_1 + \dots + x_h = y_1 + \dots + y_h, \quad x_1,\dots,x_h,y_1,\dots,y_h \in S
    $$
    implies equality of the multisets
    $$
    \set{x_1,\dots,x_h} = \set{y_1,\dots,y_h}.
    $$
    $B_2$ sets will also be called \textit{Sidon}. Since this definition immediately implies the $h$-fold sumset $S + \dots + S$ has size $\Omega\bigp{\abs{S}^h}$, it holds that if $S$ is $B_h$ then $\abs{S} = O\bigp{N^{1/h}}$. This paper is concerned with $B_h$ sets of size $\Theta\bigp{N^{1/h}}$. Such sets may be considered both sparse and extremal for $h \ge 2$. We are interested in proving analogs of results that are known to hold for dense subsets of $\ZZ_N$ for these sparse-extremal sets.

\subsection{Linear equations in $B_h$ sets}\label{sec:linbh}

One such result for dense sets is that dense subsets of $\ZZ_N$ must contain solutions to translation-invariant linear equations. Call a homogeneous linear equation $\sum_{i=1}^k c_i x_i = 0$ \textit{translation invariant} if $\sum_{i=1}^k c_i = 0$. Integer coefficients $c_1,\dots,c_k$ used to define linear equations will be implicitly assumed to be nonzero, so that $k$ is minimal.
\begin{theorem}[Roth \cite{roth}]\label{thm:qualvar}
        Let
        \[
        E:\ \sum_{i=1}^k c_i x_i = 0
        \]
        be such that $k \ge 3$ and $\sum_i c_i = 0$. If $S \subset \ZZ_N$ contains no solution to $E$ with pairwise distinct coordinates, then
        \[
        \abs{S} = o(N).
        \]
\end{theorem}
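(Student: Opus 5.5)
We will prove the statement by the density-increment form of Roth's Fourier argument, which handles all translation-invariant equations with $k\ge 3$ at once. Write $A=S$, $\alpha=\abs{A}/N$, and count solutions weighted by indicator functions:
\[
T(A)=\sum_{c_1x_1+\dots+c_kx_k=0}\prod_{i=1}^k 1_A(x_i)=\frac1N\sum_{r\in\ZZ_N}\prod_{i=1}^k \widehat{1_A}(c_i r).
\]
Solutions with two equal coordinates number at most $O(N^{k-2})$, because fixing an equality $x_i=x_j$ and all but one further free variable determines the rest (up to $O(1)$ choices, since the coefficients are fixed). Hence it suffices to show $T(A)\ge c(\alpha)N^{k-1}$ for some $c(\alpha)>0$ whenever $\alpha\ge\delta$ and $N$ is large. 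If $N$ is not prime, we first pass to $\{1,\dots,N\}\subset\ZZ$ and then embed into $\ZZ_p$ with $p$ prime and $p\gg N$, with $p$ larger than $N\sum_i\abs{c_i}$ so that no wraparound occurs. This lets us assume $N=p$ is prime and that every $c_i$ is invertible, at the cost of constant factors in the density.

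\textbf{Step 1 (pseudorandom case).} Split off the $r=0$ term, which contributes $\alpha^kN^{k-1}$. Suppose $\max_{r\ne0}\abs{\widehat{1_A}(r)}\le \eta N$. We bound the remaining terms by taking $k-2$ of the factors in sup norm and applying Cauchy--Schwarz and Parseval to the other two. Since $r\mapsto c_ir$ is a bijection, this gives an error of at most $(\eta N)^{k-2}\cdot \alpha N^{2}/N=\eta^{k-2}\alpha N^{k-1}$. Because $k\ge3$, choosing $\eta=c\,\alpha^{(k-1)/(k-2)}$ small makes this less than $\tfrac12\alpha^kN^{k-1}$, which gives the desired count.

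\textbf{Step 2 (density increment).} Otherwise some $r\ne0$ has $\abs{\widehat{1_A}(r)}\ge\eta N$. We work in the integer-interval model, with $A\subset[N]$ of density $\alpha$ and the balanced function $f=1_A-\alpha1_{[N]}$. By Dirichlet's theorem we partition $[N]$ into arithmetic progressions of length about $N^{1/2}$ (or $N^{1/4}$) on which the character $e(rx/p)$ is nearly constant. Then $\sum\abs{\sum_{P}f}\ge \eta N/2$, and since $\sum_P f=0$ summed over the pieces, some progression $P$ satisfies $\abs{A\cap P}\ge(\alpha+\eta/4)\abs{P}$. Translation invariance ($\sum c_i=0$), together with homogeneity, means that the affine map sending $P$ to $[\abs{P}]$ carries solutions to solutions, and distinctness is preserved because the map is injective. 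So the problem reproduces itself with density $\alpha+c\,\alpha^{(k-1)/(k-2)}$ on a progression of length $N^{c}$.

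\textbf{Step 3 (iteration).} Density cannot exceed $1$, so the increment can happen only $O_\alpha(1)$ times: doubling from $\alpha$ takes about $\alpha^{-1/(k-2)}$ steps. After that many steps the length is still $N^{c^{O_\alpha(1)}}\to\infty$. Eventually Step 1 applies and yields $\gg_{\alpha} N'^{k-1}$ solutions in a progression of length $N'$. For $N$ large in terms of $\delta$, this dominates the degenerate count $O(N'^{k-2})$, giving a distinct solution. This contradicts the hypothesis, so $\abs{S}=o(N)$.

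The main obstacle is Step 2. We must pass cleanly between $\ZZ_N$ and integer intervals, and also handle the fact that a relabeling of an AP in $\ZZ_p$ is only an honest solution-preserving map when there is no wraparound. The resolution is to carry out the entire iteration in $[N]\subset\ZZ$, embedding into $\ZZ_p$ with $p>\sum\abs{c_i}\,N$ only to use Fourier analysis.
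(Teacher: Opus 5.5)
The paper gives no proof of this statement; it is quoted from Roth. So your argument has to stand on its own. The density-increment plan is the right one, and the degenerate-solution count, the relabeling of a progression via $\sum_i c_i=0$, and the iteration bookkeeping are fine. The gap is in the case split between Steps 1 and 2.

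Step 1 is a test on $1_A$ in $\ZZ_p$: the main term comes from $r=0$, and the hypothesis is $\max_{r\neq 0}\lvert\widehat{1_A}(r)\rvert\le\eta p$. Step 2 negates this and then asserts $\sum_P\bigl\lvert\sum_{x\in P}f(x)\bigr\rvert\ge\eta N/2$ for the balanced function $f=1_A-\alpha 1_{[N]}$. That implication is false. Since $A\subset[N]$ and you take $p>N\sum_i\lvert c_i\rvert$, in particular $p\ge 2N$, we have
\[
\widehat{1_A}(r)=\widehat f(r)+\alpha\,\widehat{1_{[N]}}(r),\qquad \bigl\lvert\widehat{1_{[N]}}(1)\bigr\rvert=\frac{\sin(\pi N/p)}{\sin(\pi/p)}\ge\frac{2N}{\pi}.
\]
So a large nontrivial Fourier coefficient of $1_A$ is automatic and says nothing about $\widehat f$. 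Concretely, a random $A\subset[N]$ of density $\alpha$ has $\lvert\widehat{1_A}(1)\rvert\gtrsim\alpha N$, far above $\eta p$, so Step 1 never applies to it. Yet $\lVert\widehat f\rVert_\infty=o(N)$ and no long progression carries a density increment, so Step 2 fails too; this set falls into neither case.

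Your closing remark (iterate in $[N]$ and use $\ZZ_p$ only for Fourier analysis) does not fix this, because the test in Step 1 is still on $\widehat{1_A}$ rather than on $\widehat f$. Nor can you instead track density inside $\ZZ_p$: each re-embedding loses a constant factor in density, which swamps the increment.

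The standard repair is to run the dichotomy on $\widehat f$ throughout. You still compute in $\ZZ_p$, with all functions supported on $[N]$, so solutions mod $p$ are integer solutions.

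\emph{Error terms.} Telescoping $\prod_i\widehat{1_A}(c_ir)-\prod_i\alpha\widehat{1_{[N]}}(c_ir)$ produces $k$ terms, each with exactly one factor $\widehat f(c_jr)$. Bound that factor by $\lVert\widehat f\rVert_\infty$ and $k-3$ of the others by $\alpha N$. Bound the remaining two by Cauchy--Schwarz and Parseval, using that each has $\sum_r\lvert\cdot\rvert^2\le p\alpha N$. Then each term contributes at most $\lVert\widehat f\rVert_\infty\alpha^{k-2}N^{k-2}$.

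\emph{Main term.} It is $\alpha^k$ times the number of integer solutions of $E$ in $[N]^k$. This is $\gg_E N^{k-1}$ by translation invariance: for instance, take $x_1,\dots,x_{k-1}$ divisible by $c_k$ and within $\epsilon N$ of $N/2$.

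\emph{Dichotomy.} Either $\lVert\widehat f\rVert_\infty\le c_E\alpha^2N$, and then $A$ has $\gg_E\alpha^kN^{k-1}$ solutions. Or $\widehat f$ itself has a large coefficient, necessarily at a nonzero frequency since $\widehat f(0)=0$. In that case your Step 2 goes through verbatim with $\eta\asymp_E\alpha^2$, giving $O_E(\alpha^{-1})$ iterations. With that change the rest of your argument is sound.
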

Obtaining more quantitative upper bounds in general and in the case of specific equations is a prominent line of work in additive combinatorics. We are not the first to seek a sparse analog of Roth's theorem; the following result concerns the case of $B_2$ sets.
\begin{theorem}[$k=5$ Conlon--Fox--Sudakov--Zhao \cite{fewfourcycle}, $k > 5$ Prendiville \cite{densesidon}]\label{thm:qualprend}
            Let
    \[
    E:\ \sum_{i=1}^k c_i x_i = 0
    \]
    be such that $k \ge 5$ and $\sum_i c_i = 0$. If $S \subset \ZZ_N$ is a Sidon set that contains no solution to $E$ with pairwise distinct coordinates, then
    \[
    \abs{S} = o\bigp{\sqrt{N}}.
    \]
\end{theorem}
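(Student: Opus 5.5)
We argue by contradiction. Suppose $S\subset\ZZ_N$ is Sidon with $\abs{S}\ge \delta\sqrt N$ along a sequence $N\to\infty$, and that $S$ has no solution to $E$ with pairwise distinct coordinates. We count weighted solutions to $E$ via the Fourier transform:
\[
\Lambda(S)=\frac1N\sum_{r\in\ZZ_N}\prod_{i=1}^k \widehat{1_S}(c_i r).
\]
The frequency $r=0$ gives the main term $\abs{S}^k/N$. Solutions with a repeated coordinate number $O(\abs{S}^{k-2})$, because in a Sidon set a degenerate equation with $k-1\ge 4$ free variables still has $O(\abs{S}^{k-3})$ solutions per fixed pair. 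Since $\abs{S}^{k-2}=o(\abs{S}^k/N)$ would need $\abs S\gg \sqrt N$, this is false here, so the counting cannot be done naively. Instead we use a transference/relative-density argument. The main idea is that a Sidon set of size $\delta\sqrt N$ is Fourier pseudorandom. Parseval, together with the Sidon energy bound $\sum_r\abs{\widehat{1_S}(r)}^4=N E(S)\le 2N\abs S^2$, shows that
\[
\max_{r\ne 0}\abs{\widehat{1_S}(r)} = o(\abs S)
\]
fails only in a structured way. Concretely, the fact that $E(S)\approx 2\abs S^2$ forces near-equality in $\sum_{r}\abs{\widehat{1_S}}^4\ge \abs S^4+(\ldots)$, which bounds the large spectrum.

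The steps are as follows.
\begin{enumerate}
\item Establish that $\sup_{r\neq0}\abs{\widehat{1_S}(r)}\le \eta\abs S$ with $\eta\to0$ as $\abs S/\sqrt N$ approaches its maximum. Where that is unavailable, instead pass to a density increment on a Bohr set or arithmetic progression.
\item Compare $\nu=\tfrac{N}{\abs S}1_S$ to a dense model $g\colon\ZZ_N\to[0,1]$ with $\widehat g\approx\widehat\nu$ on the large spectrum. This is a dense model theorem, valid because of the restriction-type estimate $\sum_r\abs{\widehat\nu(r)}^{q}\ll N^{q}$ for $q>4$, which follows from the $L^4$ bound plus interpolation with $L^\infty$. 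Note that $k\ge5$ is exactly what makes $q=k>4$ available.
\item Apply the functional Roth theorem (Theorem~\ref{thm:qualvar}, in its counting form) to $g$, which has mean $\gg\delta$. This yields $\gg_\delta N^{k-1}$ weighted solutions.
\item Use the $L^k$ restriction estimate with $k>4$ to show $\Lambda(\nu)-\Lambda(g)$ is small. This gives $\Lambda(S)\gg_\delta \abs S^k/N$.
\item Finally, compare with degenerate solutions. Using the Sidon property more carefully, a solution with $x_i=x_j$ reduces to an equation in $k-1$ variables whose count is $O(\abs S^{k-1}\cdot\abs S^{-1}\cdots)$, which is $o(\abs S^k/N)=o(\abs S^{k-2})$ after exploiting that $k-1\ge4$ variables in a Sidon set give a power saving. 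This gives the contradiction.
\end{enumerate}

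The main obstacle is step 5 together with the restriction estimate: $\abs S^k/N\asymp\abs S^{k-2}$, so the nondegenerate count must beat degenerate counts of the same order. We would try to show that degenerate solutions number $o(\abs S^{k-2})$ by bounding each degenerate equation's solutions by $\abs S^{k-3}$ times the representation function, which is at most $2$ for Sidon sets. For $k=5$ this is tight, and the $L^{5}$ restriction bound must be sharpened by using the Sidon $L^4$ bound directly with Hölder against $\max\abs{\widehat\nu}$.
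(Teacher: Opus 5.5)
The paper does not reprove this statement. It cites the versions over $[N]$ (Conlon--Fox--Sudakov--Zhao for $k=5$, Prendiville's transference for $k>5$) and passes to $\ZZ_N$ by choosing representatives in $[N]$, which needs no coprimality between $N$ and the $c_i$. Your outline follows Prendiville's route in spirit, but step 2 fails as stated.

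There is first a normalization slip. With $\nu=\frac{N}{|S|}1_S$, which has mean $1$, a $[0,1]$-valued model would have to be essentially constant. The right object is $f=\sqrt N\,1_S$, which has mean $\delta$.

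More seriously, even for $f$ a function $g\colon\ZZ_N\to[0,1]$ with $\|\widehat f-\widehat g\|_\infty\le\epsilon N$ need not exist, and no restriction estimate can supply one. In Green-type transference the pointwise bound on the model comes from a pseudorandom majorant, which you do not have. Here is a concrete obstruction:
\begin{itemize}
\item Let $S$ be a near-maximal Sidon set inside the middle $9/10$ of an interval $P$ of length $N/4$, so $|S|\approx 0.47\sqrt N$.
\item Let $\phi=1_P*\mu_Q$, where $Q$ is a symmetric interval of length $|P|/10$ and $\mu_Q=1_Q/|Q|$.
\item Then $N^{-1}\sum_\xi|\widehat\phi(\xi)|\le\sqrt{10}$ and $\mathbb{E}\phi=1/4$, while $\phi=1$ on $S$ gives $\mathbb{E}f\phi=|S|/\sqrt N\approx0.47$.
\item Every $[0,1]$-valued $g$ has $\mathbb{E}g\phi\le 1/4$, so $\|\widehat f-\widehat g\|_\infty\ge cN$ for an absolute $c>0$.
\end{itemize}
Relaxing to $0\le g\le K$ does not rescue the argument. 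For $g=f*\mu_B$ with $B$ the Bohr set of the $\epsilon$-large spectrum, the natural bound $\|g\|_\infty\le\sqrt N(\sqrt{|B-B|}+1)/|B|$ blows up as $\epsilon\to0$. The Varnavides lower bound for the level set $\{g\ge\delta/2\}$ then depends on $\epsilon$ and is not guaranteed to beat the $O(\epsilon N^{k-1})$ transference error, so the parameters cannot be chosen consistently.

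The missing idea is to bound the model in $L^2$, using the Sidon property in physical space rather than only through the $L^4$ Fourier bound. For $g=f*\mu_B$,
\[
\mathbb{E}g^2=\sum_{y\in\ZZ_N}(1_S*1_{-S})(y)\,(\mu_B*\mu_{-B})(y)\le\frac{|S|}{|B|}+1,
\]
since $(1_S*1_{-S})(y)\le 1$ for $y\neq 0$. The argument then closes as follows:
\begin{itemize}
\item By Cauchy--Schwarz, $\{g\ge\delta/2\}$ has density at least $\delta^2/5$, uniformly in $\epsilon$.
\item \Cref{thm:quantvar} then gives $\Lambda(g)\gg_\delta N^{k-1}$.
\item Your step 4 comparison finishes the count, with $\epsilon$ chosen depending only on $\delta$ and $E$. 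That comparison plays the sup-norm smallness of $\widehat f-\widehat g$ against $\ell^{k-1}$ norms controlled by the energy bound, using $k-1\ge4$.
\end{itemize}

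Step 1 is then unnecessary. As stated it is also unavailable for fixed $\delta<1$: \Cref{prop:psd} with $h=2$ only gives $|\widehat{1_S}(\xi)|^2\le N-|S|^2+O(|S|)$, and the density-increment fallback is never specified.

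Two smaller repairs are needed.
\begin{itemize}
\item Your Fourier manipulations need each $c_i$ invertible mod $N$, which the statement does not assume. First move representatives in $[N]$ into $\ZZ_p$ for a prime $p>N\sum_i|c_i|$.
\item Your degenerate count via ``the representation function is at most $2$'' only controls $x+y$, not $c_ix+c_jy$. Instead, \Cref{lem:freeze} gives $O(|S|^{k-4}N^{3/4})=o(N^{k/2-1})$ degenerate solutions, exactly as in the paper's proof of \Cref{thm:dichotomy}.
\end{itemize}
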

Although the cited results are stated over $[N]$, the cyclic formulation follows by choosing representatives in $[N]$: any Sidon violation or solution to $E$ in the integers reduces mod $N$ to one in $\ZZ_N$. Conlon, Fox, Sudakov, and Zhao's proof is by a removal lemma for graphs with few 4-cycles. Prendiville's proof is by Fourier-analytic transference and gives a quantitative bound: a dense model is found for the rescaled indicator function $\sqrt{N}S$ and then a supersaturated version of \Cref{thm:qualvar} is used as a black box. Similar results were shown by Pascadi \cite{pascadi} and Jing, Pohoata, and Xu \cite{jing2026rothtypetheoremskstfreesets} under more general assumptions like $S$ being free of additive $K_{s,t}$ instead of Sidon (corresponding to $s=t=2$). Equations with less than five variables behave differently. Our constructions below show that the conclusion of \Cref{thm:qualprend} fails for all translation-invariant three-variable equations and for many four-variable equations.

We prove two extensions of \Cref{thm:qualprend} to $B_h$ sets for $h \ge 3$, which correspond to sets free of additive $C_{2h}$. This has been highlighted to the author as a stubborn open problem by (a subset of) the authors of \cite{fewfourcycle} and \cite{jing2026rothtypetheoremskstfreesets}, and the analogous sparse graph problem of proving a removal lemma for $C_{2h}$-free host graphs remains open when $h \ge 3$. However, our bounds either fall short of those obtained in the case of Sidon sets or require additional assumptions; an extension of analogous strength remains elusive.

Our first upper bound only requires one to forbid a sufficiently long linear equation (which may lack translation invariance) but does not obtain an asymptotic saving. Denote $C_h \coloneqq \bigp{\floor{\frac{h}{2}}!\ceil{\frac{h}{2}}!}^{1/h}$. The best-known bound for the size of the largest $B_h$ set in $\ZZ_N$ is $C_h N^{1/h} + O_h(1)$, due to Jia \cite{jia} for even $h$ and Chen \cite{chen} for odd $h$.

\begin{restatable}{theorem}{bhlin}\label{thm:solving}
Let $h\ge 3$ be an integer, and let
\[
E:\ \sum_{i=1}^k c_i x_i = 0
\]
be such that $k > 2h$. If $N$ is coprime to $c_1 \cdots c_k$ and $S \subset \ZZ_N$ is a $B_h$ set that contains no solution to $E$ with pairwise distinct coordinates, then there exists $0 < D_{h,k} < C_h$ such that
\[
\abs{S} \le \bigp{D_{h,k}+o(1)}N^{1/h}.
\]
\end{restatable}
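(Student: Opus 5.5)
We argue by contradiction, using the Fourier pseudorandomness of near-extremal $B_h$ sets announced in the abstract. Suppose $S\subset\ZZ_N$ is $B_h$ with $\abs{S}\ge (C_h-\delta)N^{1/h}$, where $\delta>0$ is small in terms of $h,k$. We will show that $S$ contains a solution to $E$ with pairwise distinct coordinates. Taking $D_{h,k}=C_h-\delta$ then proves the theorem.

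\emph{Pseudorandomness.} The first step is to show that all nontrivial Fourier coefficients of $S$ are small:
\[
\max_{r\neq 0}\abs{\widehat{1_S}(r)} \le \epsilon(\delta)\abs{S},
\]
with $\epsilon(\delta)\to 0$ as $\delta\to 0$. The approach mirrors the Jia/Chen proof of the upper bound $C_hN^{1/h}$.
\begin{itemize}
\item Split $h=a+b$ with $a=\lfloor h/2\rfloor$ and $b=\lceil h/2\rceil$.
\item Let $f$ be the representation function of $aS-bS$, counting only sums with distinct summands.
\item The $B_h$ property forces $f$ to be small pointwise, essentially $a!\,b!$ on nonzero differences.
\item The bound on $\abs{S}$ comes from comparing $\sum_x f(x)$ with a second-moment or positivity identity for $\widehat f=\widehat{1_S}^a\overline{\widehat{1_S}}^b$.
\end{itemize}
If a nonzero frequency had $\abs{\widehat{1_S}(r)}\ge\epsilon\abs{S}$, its contribution $\abs{\widehat{1_S}(r)}^{2h}$, suitably weighted, would add to the positive side of that identity. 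This would force $\abs{S}^h\le (C_h^h-c(\epsilon))N$, contradicting near-extremality. This quantitative stability version of the Jia/Chen argument is the step I expect to be the main obstacle, because the extremal inequality must be tracked with an explicit error term.

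\emph{Counting solutions.} Let $\lambda=N^{1/h}$ be the density scale and count solutions by Fourier inversion:
\[
T=\frac1N\sum_{r}\prod_{i=1}^k\widehat{1_S}(c_ir).
\]
Since $N$ is coprime to every $c_i$, the map $r\mapsto c_ir$ is a bijection. The $r=0$ term therefore gives $\abs{S}^k/N$. The remaining terms are handled as follows:
\begin{itemize}
\item Pull out $k-2h$ factors, each bounded by $\epsilon\abs{S}$.
\item Bound the other $2h$ factors by Hölder's inequality.
\item The key input is $\sum_r\abs{\widehat{1_S}(r)}^{2h}=N\cdot\#\{\text{$h$-fold additive coincidences}\}=O_h(N\abs{S}^h)$, which follows from the $B_h$ property.
\end{itemize}
The error is thus $O_h(\epsilon^{k-2h}\abs{S}^{k-2h}\abs{S}^h)$. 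The main term $\abs{S}^k/N\asymp_h\abs{S}^{k-h}$ then dominates once $\epsilon$ is small, and this is exactly where $k>2h$ is used.

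\emph{Removing degenerate solutions.} We must discard solutions with $x_i=x_j$ for some $i\ne j$. Merging two coordinates gives an equation in $k-1$ variables, possibly with a zero coefficient. Its solutions are counted by the same Fourier bound with one fewer factor, giving at most $O(\abs{S}^{k-1}/N+\abs{S}^{k-1-h})$. This uses $k-1\ge 2h$ and Hölder over $2h$ factors together with $\ell^\infty$ bounds. If a coefficient vanishes, the count is trivially $\abs{S}\cdot(\text{count for } k-2 \text{ variables})$, and each term gains at least one factor of $\abs{S}^{-1}$ relative to $\abs{S}^{k-h}$.

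Hence the number of nondegenerate solutions is positive, contradicting the hypothesis on $S$. Choosing $\delta$ so that $\epsilon(\delta)$ is below the required threshold yields $D_{h,k}<C_h$.
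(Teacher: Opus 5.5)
Your proposal is correct and has the same architecture as the paper's proof: pseudorandomness, then Fourier counting with $k-2h$ nonzero-frequency factors bounded in $\ell^\infty$ and the other $2h$ handled by H\"older and \Cref{lem:energy}, then removal of degenerate solutions. The main difference is how you obtain pseudorandomness.

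\textbf{Pseudorandomness.} The paper (\Cref{prop:psd}) expands $\widehat{1_S}(\xi)^{\lceil h/2\rceil}\overline{\widehat{1_S}(\xi)}^{\lfloor h/2\rfloor}$ into $C_h^h$ times a character sum over the set $D$ of distinct-summand values, plus $O_h(\abs{S}^{h-1})$. It then uses that a nontrivial character sums to zero over $\ZZ_N$, so the sum over $D$ has modulus at most $N-\abs{D}$. This gives the pointwise bound $\abs{\widehat{1_S}(\xi)}^h\le C_h^hN-\abs{S}^h+O_h(\abs{S}^{h-1})$.

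Your Parseval version also works, and it is not really an obstacle. The distinct-summand representation function $f$ takes values in $\{0,a!b!\}$, so for any $r\neq 0$,
\[
\widehat f(0)^2+\abs{\widehat f(r)}^2\le N\sum_x f(x)^2=a!b!\,N\,\widehat f(0).
\]
Also $\abs{\widehat f(r)}=\abs{\widehat{1_S}(r)}^h+O_h(\abs{S}^{h-1})$. Note that $\widehat f$ is only approximately $\widehat{1_S}^a\overline{\widehat{1_S}}^b$, since $f$ omits tuples with repeats. Together these give $\abs{\widehat{1_S}(r)}^{2h}\le \abs{S}^h\bigl(C_h^hN-\abs{S}^h\bigr)+O_h(\abs{S}^{2h-1})$.

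This is weaker than the paper's bound: the deficit $C_h^hN-\abs{S}^h$ enters with exponent $1/(2h)$ rather than $1/h$. So for each fixed $k$ your argument yields a worse constant than the paper's explicit $D_{h,k}=C_h(1-\lambda_{h,k})^{1/h}$, although both tend to $2^{-1/h}C_h$ as $k\to\infty$. Since the statement only asserts that some $D_{h,k}<C_h$ exists, that is enough.

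\textbf{Degenerate solutions.} Here your route also differs, and it needs two small repairs. The paper fixes the repeated value and $k-2h-1$ further variables. It then applies \Cref{lem:freeze} to the remaining $2h-1$ variables, whose coefficients are original $c_i$ and hence units mod $N$. Merging $x_i=x_j$ instead creates the coefficient $c_i+c_j$.

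\emph{Repair 1.} If $c_i+c_j\neq 0$, it need not be a unit mod $N$. However, $\gcd(c_i+c_j,N)\le \abs{c_i+c_j}=O_E(1)$, so $\sum_r\abs{\widehat{1_S}((c_i+c_j)r)}^{2h}$ is still $O_E(N\abs{S}^h)$ and your bound $O(\abs{S}^{k-1-h})$ survives. You should say this explicitly.

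\emph{Repair 2.} If $c_i+c_j=0$ and $k=2h+1$, the residual equation has only $2h-1$ variables, so H\"older over $2h$ factors is unavailable. You need the $L^{2h-1}\le L^{2h}$ comparison used in \Cref{lem:freeze}, which gives $O\bigl(\abs{S}N^{1-1/(2h)}\bigr)$ degenerate solutions. Against the main term $\asymp N^{1+1/h}$ this saves only $N^{-1/(2h)}\asymp \abs{S}^{-1/2}$, not the full $\abs{S}^{-1}$ you claim. That saving still suffices, and it matches what the paper's argument gets.
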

It is only really required to assume few solutions to $E$ and not forbid them entirely. $D_{h,k}$ is made explicit in \Cref{sec:lineqbh}. It is worth noting that, for fixed $h$, as $k \to \infty$ one has $D_{h,k} \searrow 2^{-1/h}C_h$ monotonically. The proof makes use of the following result, which we believe to be of independent interest.

\begin{restatable}[Pseudorandomness]{proposition}{pseud}\label{prop:psd}
    Let $h \ge 2$ and $S \subset \ZZ_N$ be a $B_h$ set. Then for any nonzero $\xi \in \ZZ_N$,
    $$
    \abs{\widehat{S}(\xi)}^h \le C_h^h N - \abs{S}^h + O_h(\abs{S}^{h-1}).
    $$
\end{restatable}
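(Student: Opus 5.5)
The plan is to factor $\widehat{S}(\xi)^h = \widehat{S}(\xi)^a\,\widehat{S}(\xi)^b$ with $a=\floor{h/2}$ and $b=\ceil{h/2}$, so that $a!\,b! = C_h^h$. Each factor is then rewritten as a Fourier coefficient of a set of subset-sums, and the $B_h$ property is used to show that a certain set of differences has no repetitions. Write $e(t)=e^{2\pi i t/N}$. Let $U$ be the multiset of sums $\sum_{x\in A}x$ over $a$-element subsets $A\subset S$, and $V$ the analogous multiset over $b$-element subsets. Only the size bound $\abs{\widehat{S}(\xi)}\le\abs{S}$ and the complement identity for Fourier coefficients are needed as inputs.

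\textbf{Step 1 (passing to subset sums).} Expanding $\widehat{S}(\xi)^a=\sum_{x_1,\dots,x_a\in S} e(\xi(x_1+\dots+x_a))$, split the tuples into those with pairwise distinct coordinates and the rest. The distinct-coordinate tuples contribute exactly $a!\,\widehat{U}(\xi)$, where $\widehat{U}(\xi)=\sum_{A} e(\xi \sum_{x\in A} x)$. The remaining tuples number $O_h(\abs{S}^{a-1})$. Hence $\widehat{S}(\xi)^a = a!\,\widehat{U}(\xi)+O(\abs{S}^{a-1})$, and likewise $\widehat{S}(\xi)^b=b!\,\widehat{V}(\xi)+O(\abs{S}^{b-1})$. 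Multiplying and using $\abs{\widehat U}\le \abs{S}^a$, $\abs{\widehat V}\le\abs{S}^b$ gives
\[
\abs{\widehat{S}(\xi)}^h = a!\,b!\,\bigl|\widehat{U}(\xi)\,\overline{\widehat{V}(\xi)}\bigr| + O_h(\abs{S}^{h-1}).
\]
Taking the conjugate of one factor does not change the modulus.

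\textbf{Step 2 (distinct differences).} Now expand
\[
\widehat{U}(\xi)\,\overline{\widehat{V}(\xi)}=\sum_{A,B} e\Bigl(\xi\Bigl(\sum_{x\in A}x-\sum_{y\in B}y\Bigr)\Bigr),
\]
where $A$ ranges over $a$-subsets and $B$ over $b$-subsets of $S$. Pairs with $A\cap B\neq\emptyset$ number $O_h(\abs{S}^{h-1})$ and go into the error term. For disjoint pairs I claim the values $d(A,B)=\sum_A x-\sum_B y$ are pairwise distinct. Indeed, suppose $d(A_1,B_1)=d(A_2,B_2)$. Then $\sum_{A_1}+\sum_{B_2}=\sum_{A_2}+\sum_{B_1}$ is an identity between two sums of $h$ elements of $S$. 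The $B_h$ property forces the multisets $A_1\uplus B_2$ and $A_2\uplus B_1$ to coincide. Since $A_1\cap B_1=\emptyset$, every element of $A_1$ must lie in $A_2$, and comparing sizes gives $A_1=A_2$ and then $B_1=B_2$.

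This also covers even $h$, where $U=V$: the diagonal pairs $A=B$ overlap and are already in the error term, and disjoint pairs have nonzero distinct differences. So the disjoint pairs give a genuine set $D\subset\ZZ_N$ with
\[
\abs{D}=\binom{\abs S}{a}\binom{\abs S-a}{b}=\frac{\abs S^h}{a!\,b!}-O_h(\abs S^{h-1}).
\]

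\textbf{Step 3 (complement trick).} Since $\xi\neq 0$, we have $\widehat{\ZZ_N}(\xi)=0$ and hence $\widehat{D}(\xi)=-\widehat{D^c}(\xi)$. This gives $\abs{\widehat D(\xi)}\le N-\abs D$. Combining the three steps,
\[
\abs{\widehat{S}(\xi)}^h\le a!\,b!\,\bigl(N-\abs D+O_h(\abs S^{h-1})\bigr)+O_h(\abs S^{h-1}) = C_h^hN-\abs S^h+O_h(\abs S^{h-1}).
\]

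The main point requiring care is Step 2: choosing the $a$/$b$ split so that the $B_h$ property yields distinctness of the differences $d(A,B)$, rather than only a bounded multiplicity. Working with sums $u+v$ would cost a factor $\binom{h}{a}$ and give only the weaker constant $h!$. The rest is bookkeeping of the $O_h(\abs S^{h-1})$ errors from coincident coordinates.
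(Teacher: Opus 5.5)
Your proof is correct and takes essentially the same route as the paper. You pair $\lceil h/2\rceil$ unconjugated factors with $\lfloor h/2\rfloor$ conjugated ones, use the $B_h$ property to show that a disjoint configuration is determined by its difference of sums, and bound the resulting character sum over $D$ by $N-|D|$ via the complement. The only difference is bookkeeping: you pass to unordered subsets first so that the differences are distinct, whereas the paper keeps ordered tuples with distinct coordinates and shows each difference is represented exactly $r!\,s!$ times.
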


This establishes strong Fourier pseudorandomness of extremal $B_h$ sets. Such sets then behave like the ambient group $\ZZ_N$ with regards to solving linear equations, and have roughly the expected number of solutions in a random subset of the same size. A similar pseudorandomness result was shown for Sidon subsets of $[N]$ by Ortega and Prendiville \cite{exsidon}, but their proof does not seem to easily extend to $h \ge 3$.

Our second upper bound does obtain an asymptotic saving, but only when forbidding a certain class of equations.
\begin{definition}[Subdivision]\label{def:subdivision}
    A homogeneous linear equation
    $$
    c_1 x_1 + \dots + c_k x_k = 0
    $$
    is an \textit{$h$-subdivision} if there exists a partition of its coefficients into parts of size $h$ such that all coefficients in each part are equal. That is, it can be written as
    $$
    d_1(x_{1,1}+\cdots+x_{1,h})+\cdots+d_m(x_{m,1}+\cdots+x_{m,h})=0.
    $$
\end{definition}

\begin{restatable}{theorem}{sublin}\label{thm:sublin}
Let $h\ge 3$ be an integer, and let
\[
E:\ \sum_{i=1}^k c_i x_i = 0
\]
be an $h$-subdivision with $k \ge 3h$ and $\sum_i c_i = 0$. If $S \subset \ZZ_N$ is a $B_h$ set that contains no solution to $E$ with pairwise distinct coordinates, then
\[
\abs{S} = o\bigp{N^{1/h}}.
\]
\end{restatable}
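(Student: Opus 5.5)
The idea is to reduce \Cref{thm:sublin} to the Sidon-type statement \Cref{thm:qualprend}, or more directly to Roth's theorem \Cref{thm:qualvar}, via the $h$-fold sumset. Suppose $|S| \ge \delta N^{1/h}$. Consider the multiset-free sumset
\[
A \coloneqq \set{x_1+\dots+x_h : x_1,\dots,x_h \in S \text{ distinct}} \subset \ZZ_N .
\]
Since $S$ is $B_h$, each element of $A$ has a unique representation. Hence $|A| = \binom{|S|}{h} \ge c_h\delta^h N$, so $A$ is a dense subset of $\ZZ_N$.

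Write $E$ as $d_1 y_1 + \dots + d_m y_m = 0$ with $m = k/h \ge 3$, where $y_j = x_{j,1}+\dots+x_{j,h}$. Translation invariance of $E$ gives $h\sum_j d_j = 0$. Hence $\sum_j d_j = 0$ after dividing by $h$ if $N$ is coprime to $h$, or as integers directly, since the $c_i$ are integers. So $E' : \sum_j d_j y_j = 0$ is a translation-invariant equation in $m \ge 3$ variables. By \Cref{thm:qualvar}, in its supersaturated form (Varnavides averaging), $A$ contains $\Omega_{\delta}(N^{m-1})$ solutions $(y_1,\dots,y_m) \in A^m$ to $E'$.

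Each such solution lifts, by unique representation, to a solution of $E$ in $S$. The $m$ blocks are the representations of the $y_j$, and within each block the coordinates are distinct by the definition of $A$. It remains to discard lifts whose coordinates coincide across blocks, and this is the main step. Suppose $x_{j,a} = x_{j',b}$ for some $j \ne j'$. Then, viewed in $S$, the solution has at most $k-1$ free elements of $S$ subject to one linear relation. I will count these degenerate solutions directly in $S$. Fix the coincidence pattern. The solution is then determined by choosing all but one block freely, giving at most $|S|^{k-h}$ choices. The last block is an element of $A$, determined up to the equation, and its representation is unique; because of the coincidence, one of its entries is already fixed. A more careful bound is the following. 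The number of solutions to $E$ in $S$ with a prescribed coincidence is at most the number of solutions to an equation in $k-1$ variables over $S$. After grouping, this has at most $|S|^{k-1-h}$ choices for the remaining variables, since the last $h$-block sum is forced and uniquely representable when it is a full block. This gives $O(|S|^{k-h-1}) = O(N^{(k-h-1)/h}) = O(N^{m-1-1/h})$. Blocks broken by the coincidence need the $B_h$ property applied to a partial block. Here I would instead use the bound $O(N^{(k-1)/h - 1})$ obtained by choosing $k-h$ variables and noting that an $h$-sum is determined; again this is $o(N^{m-1})$. These degenerate counts are $o(N^{m-1})$, while the number of lifted solutions is $\Omega(N^{m-1})$. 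So some lifted solution has pairwise distinct coordinates, which contradicts the hypothesis. Since $\delta>0$ was arbitrary, this gives $|S| = o(N^{1/h})$.

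The main obstacle is making the degenerate-count bound uniform across all coincidence patterns, especially those that split blocks unevenly. The Roth step also needs a mild divisibility convention: coefficients $d_j$ that are not invertible mod $N$ should be handled by the standard passage to $[N]$ or to a suitable prime modulus, as in the remark after \Cref{thm:qualprend}. The condition $k \ge 3h$ is used exactly to ensure $m \ge 3$, so that Roth applies; there is no asymptotic Roth theorem for $m=2$.
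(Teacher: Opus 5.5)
Your proposal is correct and follows essentially the same route as the paper: the sumset $A$ of $h$-subsets with $|A|=\binom{|S|}{h}=\Omega(N)$, Varnavides applied to the grouped equation $\sum_j d_j y_j=0$, and discarding lifts whose blocks intersect via an $O(N^{m-1-1/h})$ count. The ``main obstacle'' you flag is not a real one: a coincidence involves only two of the $m\ge 3$ blocks, so you can always take the forced block to be an untouched one. Its sum is then determined up to $O_E(1)$ choices and uniquely represented, which gives $O_E(|S|^{k-h-1})$ uniformly over the $O_k(1)$ coincidence patterns. This is the paper's $O(|S|^{2h-1}N^{m-3})$ count in different bookkeeping, and it is a second place where $m\ge 3$ is used, not only in the Roth step.
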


\subsection{Chromatic thresholds in Sidon sets}\label{sec:introchromthresh}

 \Cref{thm:qualprend} concerns translation-invariant equations in Sidon sets. What about non-translation-invariant equations? The dense scale result here is an additive analog of chromatic thresholds in graphs. Erd\H{o}s and Simonovits introduced in \cite{erdossimonchrom} what is now known as the \textit{chromatic threshold} of a graph $H$. Roughly speaking, it measures how much density is required in an $H$-free graph to force bounded chromatic number. The natural additive counterpart uses Cayley graphs. For $A \subset \ZZ_N$ we will denote by $(\ZZ_N,A)$ the directed \textit{Cayley graph} with an edge $u \to v$ if $v-u \in A \setminus \set{0}$, and by $\chi(\ZZ_N,A)$ the chromatic number of its underlying undirected Cayley graph. Then the chromatic threshold of a linear equation $E$ captures how large an $E$-solution-free subset $A \subseteq \FF_p$ must be to force bounded chromatic number of $(\FF_p,A)$, where $p$ is any prime. See \cite{chromthresh} for formal definitions of these thresholds and discussion about recent developments. The main result of Liu, Wu, Yang, and Zhang gives a complete classification of equations with vanishing chromatic threshold. More in line with the Roth-type results discussed in \Cref{sec:linbh}, their result can be viewed as a characterization of which equations are such that forbidding them forces a dichotomy on $A$: either it is small or structured.
\begin{theorem}[Liu--Wu--Yang--Zhang \cite{chromthresh}*{Thm.~1.4}]\label{thm:zhang}
    Let $p$ be a prime and
    \[
    E:\ \sum_{i=1}^k c_i x_i = 0
    \]
    be such that $k \ge 3$. Then the following are equivalent:
    \begin{itemize}
        \item There exists $I\subseteq [k]$ with $\abs{I}\ge 3$ and $\sum_{i\in I} c_i=0$.
        \item Any $A \subset \FF_p$ free of pairwise distinct solutions to $E$ is such that either $\abs{A} = o(p)$ or $\chi(\FF_p,A) = O(1)$.
    \end{itemize}
\end{theorem}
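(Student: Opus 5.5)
The plan is to prove the two implications separately. The forward direction (a zero-sum subcollection of size at least three forces ``small or bounded $\chi$'') will use Fourier analysis plus a Bohr-set colouring. The converse will be a construction.

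\emph{Colouring criterion.} First I would record that if $A$ avoids a Bohr set $B(\Gamma,\rho)=\{x:\ \normT{\gamma x/p}<\rho\ \forall \gamma\in\Gamma\}$, then $\chi(\FF_p,A)\le (2/\rho)^{\abs{\Gamma}}$. To see this, colour $x$ by the cell of $\T^{\Gamma}$ of side $\rho/2$ that contains $(\gamma x/p)_{\gamma\in\Gamma}$. Two vertices with the same colour differ by an element of $B(\Gamma,\rho)$, so they are not adjacent. Hence it suffices to show: if $\abs{A}\ge\delta p$, $A$ is $E$-free, and $I$ with $\abs{I}\ge 3$ has $\sum_{i\in I}c_i=0$, then $A$ misses some Bohr set whose rank and radius depend only on $\delta$ and $E$.

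\emph{Forward direction.} Write $J=[k]\setminus I$ and $t=-\sum_{j\in J}c_jx_j$. For a target $t$, count
\[
N_I(t)=\#\bigl\{(x_i)_{i\in I}\in A^I:\ \textstyle\sum_{i\in I}c_ix_i=t\bigr\}=p^{\abs{I}-1}\sum_{\xi}\prod_{i\in I}\widehat{A}(c_i\xi)\,e(\xi t/p)
\]
(normalized suitably). Let $\Gamma$ be the $\epsilon$-large spectrum of $A$; its size is $O_{\delta,\epsilon}(1)$ by Parseval. Since $\abs{I}\ge3$, the contribution of $\xi\notin\Gamma$ is at most $\epsilon\cdot$(Parseval), which is $O(\epsilon p^{\abs{I}-1})$. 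For $\xi\in\Gamma$ and $t$ in a small Bohr set $B'=B(c\Gamma,\rho)$, the phase satisfies $e(\xi t/p)\approx1$. Therefore $N_I(t)=N_I(0)+O(\epsilon p^{\abs{I}-1})$. The supersaturated form of \Cref{thm:qualvar} (Varnavides) gives $N_I(0)\ge c(\delta)p^{\abs{I}-1}$. Choosing $\epsilon\ll c(\delta)$, every $t\in B'$ has $\gg p^{\abs{I}-1}$ representations, and all but $O(p^{\abs{I}-2})$ of them have pairwise distinct coordinates avoiding any fixed $\abs{J}$ elements.

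It remains to produce distinct $a_j\in A$ ($j\in J$) with $-\sum_j c_ja_j\in B'$. Take a Bohr set $B''\subset B'$ with $\sum_j\abs{c_j}\cdot B''\subset B'$. Suppose $A$ meets every bounded-rank Bohr set, since otherwise we are done by the colouring criterion. Pick $a_1\in A\cap B''$. Since $a_1\ne0$, some character is not small on it, and adding that character gives a smaller Bohr set $B_2\subset B''$ with $a_1\notin B_2$. Pick $a_2\in A\cap B_2$, and iterate $\abs{J}$ times; the ranks stay bounded. Combining these $a_j$ with a distinct $I$-solution yields a pairwise distinct solution to $E$, which is a contradiction.

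\emph{Converse.} Suppose no subcollection of size at least three sums to zero. I would build $A=A_1\cup A_2$. Here $A_1$ is a dense ``interval Bohr set'' $\{x:\ x/p\in U\}$, with $U\subset\T$ an open arc chosen so that the nonvanishing partial sums $\sum_{i\in I}c_i$ keep every combination $\sum_i c_iu_i$ away from $0\in\T$; since all relevant sums of size at least three are nonzero, a short arc around a suitable point works. The part $A_2$ is a sparse set living inside a tiny Bohr neighbourhood, modelled on Borsuk/Kneser-type graphs on a high-dimensional sphere. Such a set has unbounded $\chi$ by a topological (Lovász) bound, and it is small enough that any solution must use it in a controlled pattern. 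The main obstacle is this step: verifying that mixed solutions, which use both $A_1$ and $A_2$ and possibly rely on size-two zero-sum pairs $c_i=-c_{i'}$, cannot occur. My first attempt would be to scale $A_2$ so that its contribution is negligible on $\T$, which reduces each mixed equation to a subcollection of $E$ that is excluded by the choice of the arc $U$.
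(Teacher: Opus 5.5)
Your forward direction is sound. It is the dense version of the argument the paper adapts from \cite{chromthresh} to prove \Cref{thm:dichotomy}: large spectrum, a Bohr set on the rescaled spectrum, supersaturation for the translation-invariant subequation on $I$, and a colouring by Bohr cells. In your version, Parseval together with $\abs{I}\ge 3$ plays the role of the paper's $L^4$ bound together with $\abs{I}\ge 5$. To extract distinct $a_j$ you iteratively refine the Bohr set, working with $A\setminus\{0\}$. That is a valid substitute for the paper's split into $\abs{A\cap B}\ge k$ versus $\abs{A\cap B}<k$, where the second case uses a bounded-degree colouring inside each cell. The paper only cites this theorem, so it has no proof of the converse to compare with.

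Your converse, however, cannot work as designed, and it fails exactly at the step you flagged. Take the Schur equation $x_1+x_2-x_3=0$, which has no zero-sum subcollection of size at least three. Your $A_1$ is a block of about $\eta p$ consecutive residues, bounded away from $0$ as it must be to avoid $E$. So $A_1$ misses a rank-one Bohr set, and $\chi(\FF_p,A_1)=O(1)$ by your own colouring criterion. Since $\chi(\FF_p,A_1\cup A_2)\le\chi(\FF_p,A_1)\,\chi(\FF_p,A_2)$, you need $\chi(\FF_p,A_2)\to\infty$. The colouring criterion then forces some $y\in A_2\setminus\{0\}$ with $\normT{y/p}<\eta/2$. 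Such a $y$ has at least about $\eta p/2$ representations $y=x_3-x_1$ with $x_1,x_3\in A_1$. Choose one with $x_1,x_3\ne y$; since $y\neq 0$ also $x_1\neq x_3$, so $x_1+y=x_3$ is a pairwise distinct solution.

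The same obstruction appears whenever $E$ has a pair $c_i=-c_{i'}$, for example $x_1-x_2+2x_3=0$. On such a pair, $c_iu+c_{i'}u'$ is near $0$ for all $u,u'\in U$, so no choice of arc excludes these mixed patterns. Making $A_2$ ``negligible on $\T$'' is precisely what makes them solvable. More generally, any add-on with unbounded chromatic number must meet every bounded-rank Bohr set of bounded-below radius, while $A_1-A_1$ contains one.

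What is missing is this: for such equations the dense part itself must have $c_i(A-A)$ avoiding a set of unbounded chromatic number. That is a Kriz-type phenomenon, separating chromatic recurrence from density recurrence. The Kneser/Borsuk--Ulam input therefore has to be built into the dense set, not attached as a sparse piece next to an interval.
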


We use ``dichotomy'' to mean that at least one of the conclusions hold; they are not, of course, mutually exclusive.

\subsubsection{Dichotomy}

We are able to establish one direction of the natural sparse-extremal analog.

\begin{restatable}[Dichotomy]{theorem}{structchar}\label{thm:dichotomy}
    Let
    \[
    E:\ \sum_{i=1}^k c_i x_i = 0
    \]
    be such that $k\ge 5$ and there exists $I\subseteq [k]$ with $\abs{I}\ge 5$ and $\sum_{i\in I} c_i=0$. Then for every $\varepsilon>0$ there
    exists $C=C(\varepsilon,E)$ such that the following holds. For any $N$ coprime to $c_1 \cdots c_k$ and any Sidon $A\subset \ZZ_N$ with no pairwise distinct solution to $E$, either
    \[
    \abs{A} <
    \varepsilon \sqrt{N} \quad \text{or} \quad \chi\bigl(\ZZ_N,A\bigr)\le C.
    \]
\end{restatable}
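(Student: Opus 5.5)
We argue by contradiction, in the spirit of the dense proof of \Cref{thm:zhang}. Suppose $A$ is Sidon, has no pairwise distinct solution to $E$, satisfies $\abs{A}\ge\varepsilon\sqrt N$, and has $\chi(\ZZ_N,A)$ large. Write $E$ as
\[
\sum_{i\in I}c_ix_i+\sum_{j\notin I}c_jx_j=0,
\]
where the subequation $E_I$ is translation invariant and has $\abs{I}\ge5$ variables. The aim is to build a solution to $E$ from two pieces: a solution of $E_I$ inside a suitably translated copy of $A$, and a fixed choice for the remaining variables. Translation invariance of $E_I$ is what lets us absorb a translate. Concretely, given a shift $t$ and elements $a_j\in A$ for $j\notin I$, it suffices to find pairwise distinct $x_i\in A$ with
\[
\sum_{i\in I}c_i x_i=-\sum_{j\notin I}c_ja_j.
\]
This is an inhomogeneous version of $E_I$ with target $r=-\sum_{j\notin I}c_ja_j$.

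\textbf{Step 1: counting solutions to the inhomogeneous equation.} I will run the transference argument behind \Cref{thm:qualprend} (Prendiville's dense model for $\sqrt N\,1_A$), with $\abs{I}\ge5$ making $E_I$ Fourier-controlled by an $L^4$-type norm. The Sidon property bounds the additive energy of $A$, which controls $\sum_\xi\abs{\widehat A(\xi)}^4$. This yields the count
\[
\#\Bigl\{x\in A^I:\sum_{i\in I}c_ix_i=r\Bigr\}=\frac{\abs{A}^{\abs I}}{N}\,\mathbb{E}_{\text{model}}+\text{error},
\]
where the main term comes from the dense model $f$ for $\sqrt N\,1_A$, and the error depends on the large Fourier coefficients of $A$. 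Coprimality of $N$ with the $c_i$ lets us dilate freely. Degenerate solutions, those with repeated coordinates, number $O(\abs{A}^{\abs I-2})$ by the Sidon property and $\abs I\ge5$, so they are negligible against the main term $\asymp\abs{A}^{\abs I}/N$.

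\textbf{Step 2: from many solutions to coloring.} Since $E$ has no distinct solutions, every target $r$ of the form $-\sum_{j\notin I}c_ja_j$ must have too few representations. I expect this to force the following: for the set $R$ of such $r$, the dense model $f$ (equivalently $A$) has a large Fourier coefficient correlated with $R$. More precisely, the convolution $\ast_{i\in I}(c_i\cdot f)$ is a smooth, Bohr-type function, and its near-vanishing on $R$ means $R$ avoids a set that, by Bogolyubov-type arguments, contains a Bohr set of bounded rank and radius depending only on $\varepsilon$ and $E$. When $I=[k]$, the target is $r=0$, and \Cref{thm:qualprend} already gives $\abs A=o(\sqrt N)$. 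For the remaining cases I will argue as follows. Fixing all but one outside variable $a_{j_0}$ shows that $c_{j_0}A$ avoids a shifted Bohr-structured set $B$ of bounded rank $d$ and radius $\rho$. Then $A$ is contained in a union of $O_{\varepsilon,E}(1)$ "Bohr annuli" bounded away from $0$, meaning elements whose frequency vector $(\xi_1a,\dots,\xi_da)$ stays at distance at least $\rho$ from $0$ on the torus $\T^d$. The key consequence is that no element of $A$ lies too close to $0$ in the Bohr metric.

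\textbf{Step 3: the coloring.} Color $x\in\ZZ_N$ by which cell of a partition of $\T^d$ into cubes of side $\rho/2$ contains $(\xi_1x,\dots,\xi_dx)$. This uses at most $(2/\rho)^d=O_{\varepsilon,E}(1)$ colors. An edge $u\to v$ with $v-u\in A$ moves the frequency vector by at least $\rho$ in $\T$-norm in some coordinate, so $u$ and $v$ land in different cubes. The coloring is therefore proper, and $\chi(\ZZ_N,A)\le C$. This contradicts the assumption that $\chi$ is large, which completes the argument.

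\textbf{Main obstacle.} I expect Step 2 to be the hard part. In the dense setting the argument works because $A$ has positive density, so a counting lemma directly gives a solution. Here we must instead deduce that \emph{every} element of $A$ avoids the Bohr neighborhood of $0$, not merely most of them. The rescaling by $\sqrt N$ also magnifies the transference errors. My first approach will be a "one exceptional element" argument. If some $a\in A$ is Bohr-close to $0$, then $a$ can be used as one of the outside variables $a_j$. After that substitution, Step 1 counts solutions with main term of order $\varepsilon^{\abs I}\abs{A}^{\abs I}/N$, which I expect to exceed both the degenerate count and the transference error, giving a distinct solution to $E$. Making this quantitative uniformly in $N$ will require a dense model that is accurate for every shift simultaneously.
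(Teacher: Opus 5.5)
\textbf{Gap in Steps 2--3.} The key consequence you aim for, that no element of $A$ lies Bohr-close to $0$, does not follow once $|J|\ge 2$, where $J:=[k]\setminus I$. For $|J|\le 1$ your plan is essentially fine.

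The outside variables must be pairwise distinct elements of $A$. The count of $E_I$-solutions at target $r=-\sum_{j\in J}c_ja_j$ can only be compared with the count at target $0$ when $e_N(r\xi)\approx 1$ on the large spectrum, i.e.\ when $r$ itself lies in the corresponding Bohr set. Putting one exceptional $a$ into a single slot and filling the other slots with arbitrary elements of $A$ puts $r$ at an uncontrolled location, where nothing forces the count to be positive.

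Likewise, fixing all but one outside variable only shows that $c_{j_0}A$ avoids the translate $z+B$ with $z=-\sum_{j\ne j_0}c_ja_j$. This says nothing about $A$ near $0$ unless $z$ is Bohr-close to $0$, which is what you are trying to establish, so the argument is circular.

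What the argument actually yields is a bound on the \emph{number} of such elements. If $A\cap B$ contains $|J|$ distinct elements, put all of them in the outside slots, so that $y=-\sum_{j\in J}c_jx_j$ is Bohr-close to $0$ and a distinct solution appears. Hence $|A\cap B|<|J|$ (the paper uses $|A\cap B|<k$).

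With this weaker conclusion your cube coloring in Step 3 is no longer proper, and the missing idea is how to finish anyway. If $u,v$ lie in the same cell and $v-u\in A$, then $v-u\in A\cap B$. So each color class induces a subgraph of maximum degree at most $2(k-1)$, which can be recolored with $2k-1$ colors. This gives $\chi(\mathbb{Z}_N,A)\le (2k-1)M^{|\Gamma|}$ with $M=O(\rho^{-1})$.

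\textbf{Degenerate solutions.} Your bound $O(|A|^{|I|-2})$ is not negligible. Since $|A|\ge\varepsilon\sqrt N$, the main term $|A|^{|I|}/N$ is itself of order $|A|^{|I|-2}$. You need a power saving from the Sidon property. For example, fix all but three variables and use the $L^4$ bound (Lemma~\ref{lem:freeze}) to get $O(N^{3/4})$ completions, for a total of $O(N^{|I|/2-5/4})$. Collisions between the $I$-variables and the fixed outside variables must be excluded the same way.

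\textbf{The dense model is unnecessary.} You do not need a dense model that is accurate for every shift. Write $m=|I|$, take Prendiville's supersaturation count $N_1\ge\sigma N^{m/2-1}$ at target $0$ as a black box, and compare it directly, via Lemma~\ref{lem:inversioncount}, with the count $N_2$ at target $y$:
\[ |N_1-N_2|\le 2^mN^{m/2-1}\sum_{\xi}\prod_{i\in I}|\phi_A(c_i\xi)|\,|1-e_N(y\xi)|, \qquad \phi_A:=\widehat{1_A}/(2\sqrt N). \]
Take $\Gamma=c_s^{-1}\{\xi:|\phi_A(\xi)|\ge\nu\}$, which has size at most $\nu^{-4}$ because $\sum_\xi|\phi_A(\xi)|^4\le 1$.
\begin{itemize}
\item On $\Gamma$, the factor $|1-e_N(y\xi)|$ is tiny because $y$ lies in the Bohr set.
\item Off $\Gamma$, one factor is below $\nu$, and H\"older with exponent $m-1\ge 4$ bounds the remaining product by $1$.
\end{itemize}
No transference or Bogolyubov-type argument is needed beyond that black box.
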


This result also sheds further light on the open problem of understanding the structure of Sidon sets with near-maximum size (see, for instance, Eberhard--Manners \cite{apparentstructure} or a blog post of Gowers \cite{gowersblog}). 

\subsubsection{Constructions}\label{sec:introcons}

For the other direction, we find many obstructions showing that the assumption $\abs{I} \ge 5$ cannot in general be lowered. That is, for infinitely many $N$, we construct Sidon $A \subset \ZZ_N$ of size $\Omega\bigp{\sqrt{N}}$ such that also $\chi(\ZZ_N,A) = \omega(1)$ and $A$ is free of certain equations with zero-sum subsets of two, three, and four coefficients. Note that the Sidon equation
\begin{equation}\label{eq:sidoneq}
    x_1 + x_2 = x_3 + x_4
\end{equation}
provides an easy obstruction, as forbidding pairwise distinct solutions does not impose any extra constraints on a Sidon set; \Cref{thm:dichotomy} cannot hold for $E$ taken to be this equation, as witnessed by any large Sidon set generating a Cayley graph with unbounded chromatic number. Our contribution is to provide many obstructions beyond this trivial one.

Call a solution $x_1,\dots,x_k$ to $E$ \textit{trivial} if there exists a partition $I_1 \cup \dots \cup I_m = [k]$ such that for each $j \in [m]$, $\set{x_i}_{i \in I_j}$ are all equal and $\sum_{i \in I_j} c_i = 0$. Unlike the setting of \Cref{thm:dichotomy}, our constructions avoid all nontrivial solutions rather than merely pairwise distinct solutions. This first construction exhibits specific equations other than \eqref{eq:sidoneq} with zero-sum subsets of three and four coefficients for which \Cref{thm:dichotomy} does not hold.

\begin{restatable}{theorem}{exeq}\label{thm:exeq}
     For infinitely many odd $N$, there exists $S \subset \ZZ_N$ such that
     $$
     S \text{ is Sidon}, \quad \abs{S} \ge \bigp{\frac 1 2 - o(1)}\sqrt{N}, \quad \chi(\ZZ_N,S) = \Omega\bigp{N^{1/8}}.
     $$
     Furthermore, $S$ contains no nontrivial solution to any of
    $$
    2x_1 - x_2 - x_3 = 0, \quad 2x_1 - x_2 - x_3 + x_4 = 0, \quad 2x_1 - 2x_2 + x_3 - x_4 = 0.
    $$
\end{restatable}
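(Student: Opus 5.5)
The plan is an explicit algebraic construction. I would start from a parabola-type Sidon set in which the additive relations split into a linear relation and a quadratic relation. Then I would restrict to a sub-range of parameters, which costs the factor $\tfrac12$ and is what kills the three listed equations. Finally I would lower-bound $\chi(\ZZ_N,S)$ spectrally.

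\textbf{Construction.} Take a large prime $p$ and an odd modulus $N = mp$ with $m$ of order $p$, chosen so that the integers below do not wrap around modulo $N$. Then put
\[
S=\set{\,m\,(x^2 \bmod p) + x \;:\; x\in I\,},
\]
where $I\subset[0,p)$ is a sub-interval of about half the length of $[0,p)$, or possibly a union of such pieces. The first thing to check is a \emph{no-carry} property. With $m$ larger than a bounded multiple of $p$ and $I$ short enough, any relation $\sum c_i s_i \equiv 0 \pmod N$ with $\sum\abs{c_i}\le 6$ splits into two parts:
\begin{itemize}
\item an exact integer relation $\sum c_i x_i = 0$ on the low digits;
\item a relation $\sum c_i x_i^2 \equiv 0 \pmod p$ on the high digits.
\end{itemize}
This is why restricting the range of $x$ matters. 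It yields both the Sidon property, by the usual parabola argument ($x_1+x_2=x_3+x_4$ together with $x_1^2+x_2^2\equiv x_3^2+x_4^2$ forces the multisets to agree), and $\abs{S}\ge(\tfrac12-o(1))\sqrt N$ once $m\approx p$ is tuned and $\abs{I}\approx p/2$.

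\textbf{The three equations.} For each equation I would substitute the linear relation into the quadratic one.
\begin{itemize}
\item For $2x_1=x_2+x_3$ this gives $(x_2-x_3)^2\equiv0$, so every solution is trivial.
\item For $2x_1-x_2-x_3+x_4=0$, eliminating $x_4=x_2+x_3-2x_1$ gives the quadratic form $3x_1^2-2x_1x_2-2x_1x_3+x_2x_3\equiv0$.
\item For $2x_1-2x_2+x_3-x_4=0$, the substitution gives a similar form.
\end{itemize}
Unlike the first case, the second and third forms do have nontrivial solutions modulo $p$. The main obstacle is to choose $I$, and possibly replace $x^2$ by a twisted version such as $\lambda x^2$, so that these forms have no nontrivial solutions with all variables in $I$. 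I would first try the following: parametrize the solution variety, which is a conic, and show that every nontrivial point forces one coordinate, which is determined linearly from the others over $\ZZ$, to leave an interval of half length. A sign argument may also work, writing $x_4 - x_1 = (x_2-x_1)+(x_3-x_1)$ and comparing sizes. If a single interval fails, I would instead take $I$ to be a union of residues in a suitable "half" of $\ZZ_p$ and argue via the integer relation. This is the step I expect to require the most care.

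\textbf{Chromatic number.} I would use Hoffman's bound for the Cayley graph:
\[
\chi\ge 1+\frac{\lambda_{\max}}{\abs{\lambda_{\min}}},
\]
with eigenvalues $\mathrm{Re}\,\widehat{S}(\xi)$ of the symmetrized set. Here $\lambda_{\max}\approx 2\abs{S}\asymp\sqrt N$, so it suffices to show $\abs{\widehat S(\xi)}=O(N^{3/8})$ for all $\xi\ne0$. I would get this from Weil-type bounds for incomplete Gauss sums over $I$ after completing the interval sum, which costs a logarithm or a power. If that bound is too weak, the fallback is a direct combinatorial argument that every independent set has size $O(N^{7/8})$, counting edges inside large sets with the Sidon structure.
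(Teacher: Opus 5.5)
Your construction fails at the third equation, and no choice of $I$, of $m$, or of a twist $\lambda x^2$ can repair it. Substituting $x_4=2x_1-2x_2+x_3$ gives
\[
2x_1^2-2x_2^2+x_3^2-x_4^2=2(x_1-x_2)(3x_2-x_1-2x_3).
\]
So the conic you hope to avoid is degenerate. It is a pair of lines: the trivial one $x_1=x_2$, and the rational, translation-invariant line $x_1=3x_2-2x_3$, $x_4=4x_2-3x_3$. Concretely, $(x_1,x_2,x_3,x_4)=(y+3d,\,y+d,\,y,\,y+4d)$ satisfies both $2x_1-2x_2+x_3-x_4=0$ and $2x_1^2-2x_2^2+x_3^2-x_4^2=0$ exactly over $\ZZ$; for example, $(6,4,3,7)$ works. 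This solution is nontrivial whenever $d\neq 0$. For the corresponding elements $s_i=m(x_i^2\bmod p)+x_i$ you then get $\sum_i c_is_i\in mp\ZZ$, a nontrivial solution in $S$, with no carry analysis needed. Rescaling the quadratic coordinate by $\lambda$ changes nothing, since the relation is homogeneous.

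Every interval of length $5$ contains $\set{y,y+1,y+3,y+4}$. By Szemer\'edi's theorem for $5$-term progressions, so does every $I\subset[0,p)$ with $\abs{I}\ge\delta p$ once $p$ is large, and your size requirement forces $\abs{I}\ge\delta p$. This is exactly the obstruction that \Cref{thm:parabola4} excludes: here $c_1c_2/(c_3c_4)=4$ is a square.

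The paper does not use a parabola at all. It takes a Singer perfect difference set $D\subset\ZZ_N$ with $N=2^{2t}+2^t+1$ and $D=2D$. Inside it sits the Lazebnik--Verstra\"ete $2$-fold Sidon set $S\subset D$ with $S\cap 2S=\emptyset$ and $\abs{S}\ge 2^{t-1}-2$. The $2$-fold Sidon property kills the first and third equations. For the second, $2x_1\in 2D=D$ and the Sidon property of $D$ force $2x_1\in\set{x_2,x_3}\subset S$, contradicting $S\cap 2S=\emptyset$.

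Your chromatic step is also unsound as stated. At $\xi=p$ the quadratic digit disappears, and $\widehat S(p)=\sum_{x\in I}e_m(-x)$. This is a geometric sum over an interval whose length is a constant fraction of $m$, so $\abs{\widehat S(p)}\asymp\abs{S}$. Completing sums only controls $\xi\not\equiv 0\pmod p$. Hence the uniform $O(N^{3/8})$ bound you need is false. Hoffman's bound then depends on the real parts of $\widehat S(jp)$, which for most placements of $I$ reach order $-\abs{S}$, giving only a constant.

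In the paper the exponent $1/8$ comes from a different mechanism. Since $D\setminus\set{0}\subseteq S\cup 2S\cup T$ with $\abs{T}\le 4$, and $\times 2$ is an automorphism, one gets $\chi(\ZZ_N,D)\le 9\chi(\ZZ_N,S)^2$. Separately, any two vertices of $(\ZZ_N,D)$ have exactly one common out-neighbor. This bounds every independent set by $q^{3/2}+1$ with $q=2^t$, so $\chi(\ZZ_N,D)=\Omega(N^{1/4})$, and hence $\chi(\ZZ_N,S)=\Omega(N^{1/8})$.
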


Beyond these, we are able to forbid any translation-invariant equation on three variables, along with a large family of equations on four variables.

\begin{restatable}{theorem}{parabolathree}\label{thm:parabola3}
     Let $E: \sum_{i=1}^3 c_ix_i = 0$ be such that $c_1 + c_2 + c_3 = 0$. For infinitely many $N$ coprime to $c_1c_2c_3$, there exists $A \subset \ZZ_N$ such that
     $$
     A \text{ is Sidon}, \quad \abs{A} = \Omega_E\bigp{\sqrt{N}}, \quad \chi(\ZZ_N,A) = \Omega_E\bigp{N^{1/4}}.
     $$
     Furthermore, $A$ contains no nontrivial solution to $E$.
\end{restatable}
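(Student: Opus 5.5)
We build $A$ from a parabola-type Sidon set, which avoids every three-variable translation-invariant equation. Since $c_1+c_2+c_3=0$, after reordering and scaling we may write $E$ as $a x_1 + b x_2 = (a+b) x_3$ with $a,b$ nonzero integers.

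\emph{Construction.} Take $N = p^2$ for large primes $p$ coprime to $c_1c_2c_3$ (and to $2$). Identify $\ZZ_N$ with $\ZZ_{p^2}$ via base-$p$ digits, or work in $\ZZ_p\times\ZZ_p$ if $p^2$ can be replaced by $N=pq$ with coprime factors. The cleaner route is $N=pq$ with $p,q$ distinct primes of comparable size, so that $\ZZ_N\cong \ZZ_p\times \ZZ_q$ by CRT. Start from the parabola $P=\set{(x,x^2): x\in \ZZ_p}\subset \FF_p^2$. This set is Sidon: from $x+y=z+w$ and $x^2+y^2=z^2+w^2$ one gets $xy=zw$, hence $\set{x,y}=\set{z,w}$.

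\emph{Forbidding $E$.} A solution to $E$ inside $P$ satisfies $ax+by=(a+b)z$ and $ax^2+by^2=(a+b)z^2$. Substitute $z=(ax+by)/(a+b)$, using that $a+b\ne 0$ is invertible mod $p$ for $p$ large. This gives $(a+b)(ax^2+by^2)-(ax+by)^2=ab(x-y)^2=0$, so $x=y$, hence $z=x$, and the solution is trivial. The quadratic form is doing the work here, so the parabola automatically kills all such $E$. To reach density $\Omega_E(\sqrt N)$ while controlling chromatic number, we restrict $x$ to a subset $X\subset\ZZ_p$ of size $\Omega(p)$. The resulting $A=\set{(x,x^2):x\in X}$ stays Sidon and $E$-free, with $\abs{A}=\Omega(\sqrt N)$. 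One catch: we need to embed into a cyclic group $\ZZ_N$. Using $N=p^2$ with $\FF_p^2$ is not cyclic, so we should use $N=p\cdot q$, mapping $(x,x^2)\mapsto$ the CRT element congruent to $x$ mod $p$ and $x^2$ mod $q$. But then $x^2$ mod $q$ must be interpreted carefully. Instead, take $x\in[0,p/K)$ as integers, so that $x^2<q$ can be read as an integer and the squared coordinate is well-defined, with $q\asymp p^2/K^2$. This costs density, however. So we should choose $N=p\cdot q$ where the second coordinate is $x^2 \bmod p$ but lives in a cyclic factor $\ZZ_q$ with $q=p$ impossible (since the factors must be coprime). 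The cleanest fix is to use $\ZZ_{p}\times\ZZ_{p'}$-free variants, e.g. the Erdős–Turán set $\set{2px + (x^2 \bmod p)}$ in $\ZZ_{2p^2}$-like moduli, or simply $\set{x p + (x^2\bmod p)}$ in $\ZZ_{p^2}$ restricted to $x<p/C$ so no carries occur. Both the Sidon property and the $E$-freeness then lift because sums of boundedly many elements produce no wraparound in either digit. Coprimality with $c_1c_2c_3$ is immediate for $p$ large.

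\emph{Chromatic number.} We need $\chi(\ZZ_N,A)=\Omega(N^{1/4})=\Omega(\sqrt p)$. The plan is to find a large clique-like or high-fractional-chromatic structure. We would try to show that the Cayley graph contains a subgraph whose independence number is small. Take $X$ to be an interval $[0,\delta p)$. Vertices $up+v$ with $u,v$ in ranges form a grid, and edges correspond to steps $(x, x^2\bmod p)$. A standard bound is $\chi\ge |V|/\alpha$, so we want to bound the independence number of the induced graph on a box of size $M$ by $O(M/\sqrt p)$. Here a Fourier or Hoffman-type eigenvalue bound seems natural. The eigenvalues of $(\ZZ_N,A)$ are $\widehat A(\xi)$, and the Hoffman bound gives $\alpha\le N\cdot \frac{-\lambda_{\min}}{|A|-\lambda_{\min}}$. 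This yields $\chi\ge 1+|A|/|\lambda_{\min}|$. For the parabola, $|\widehat A(\xi)|=O(\sqrt p)$ by Gauss sums (with interval truncation costing a log), so $\chi\gtrsim p/\sqrt p=\sqrt p=N^{1/4}$.

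The main obstacle is this last step. We must verify the Gauss-sum bound $|\widehat A(\xi)|\ll\sqrt p\log p$ uniformly in nonzero $\xi$ for the truncated, carry-free embedding, including frequencies that only see the top digit. For those frequencies the sum is over an interval and is large, roughly $|A|$. Those must be handled by instead taking $X$ to be all of $\ZZ_p$ in a genuinely product-group construction, if a cyclic $N$ with a suitable factorization can be arranged, or by replacing the Hoffman bound with a direct independence-number argument on the graph restricted to low frequencies.
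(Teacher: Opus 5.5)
Your $E$-freeness computation is right; the identity $(a+b)(ax^2+by^2)-(ax+by)^2=ab(x-y)^2$ is exactly what the paper uses. The gap is the chromatic number bound, which you leave open, and the Hoffman route you sketch cannot close it.

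\textbf{Why Hoffman fails.} In every cyclic embedding you consider, $A$ fills an interval that is a constant fraction of some cyclic factor or digit. This is the top digit $x<p/C$ in $\ZZ_{p^2}$, the low digit $x^2\bmod p\in[0,p)$ under base $Kp$, or the first coordinate in the paper's construction. At nonzero frequencies that see only that coordinate, $\widehat{A}(\xi)$ is essentially an exponential sum over an interval whose phases sweep a fixed fraction of the circle. Taking that fraction near $3/4$ gives $\operatorname{Re}\widehat{A}(\xi)\le -c\abs{A}$ with $c>0$ independent of $p$. So the least eigenvalue of the undirected Cayley graph is of order $-\abs{A}$, and Hoffman certifies only a constant lower bound. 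Your fallback of using all of $\ZZ_p$ in a product group is unavailable, since $\FF_p^2$ is not cyclic. Your other fallback, a ``direct independence-number argument'', is precisely the missing step.

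\textbf{Problems with the digit construction.} The claim that no carries occur is false for base-$p$ digits: two low digits $x^2\bmod p$ can sum past $p$ even when $x<p/C$. For $E$, the base must be of order $(\sum_i\abs{c_i})p$, not $2p$.

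\textbf{The paper's construction.} It reverses the CRT roles you tried. Take $M\ge 5$ with $M>\sum_i\abs{c_i}$ and $c_1c_2c_3\mid M$, a prime $p>M^2$, and $m=Mp+1$. Set
$A=\set{(z,z^2):1\le z\le p-1}\subset\ZZ_m\times\FF_p\cong\ZZ_{mp}$.
The unreduced integer $z$ sits in the large factor, so small-coefficient linear relations hold over $\ZZ$ and hence in $\FF_p$. Your quadratic-form computation then gives both the Sidon property and $E$-freeness. Crucially, the quadratic coordinate is now a genuine $\FF_p$-coordinate.

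\textbf{The chromatic bound.} Suppose $(\ZZ_N,A)$ has a proper coloring with $c\sqrt p$ colors, and restrict it to the box $\set{0,\dots,p-1}\times\FF_p$.
\begin{enumerate}
\item Some color class has size at least $p^{3/2}/c$.
\item Split this class at a threshold in the first coordinate into $L$ and $R$, each holding at least a third of it.
\item For $(x_\ell,y_\ell)\in L$ and $(x_r,y_r)\in R$ one has $x_r-x_\ell\in[1,p-1]$. So they are adjacent iff $y_r-y_\ell=(x_r-x_\ell)^2$ in $\FF_p$.
\item Equivalently, the point $(x_r,y_r-x_r^2)$ lies on the line $z=-2x_\ell x+y_\ell+x_\ell^2$.
\end{enumerate}
Since the class is independent, there are zero incidences. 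The Vinh/Cilleruelo bound $I(\cP,\cL)=\abs{\cP}\abs{\cL}/p+O(\sqrt{p\abs{\cP}\abs{\cL}})$ then forces the class to have size $O(p^{3/2})$. This is a contradiction for small $c$, so $\chi(\ZZ_N,A)=\Omega(\sqrt p)=\Omega_E(N^{1/4})$.

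This reduction needs the second coordinate to be a group coordinate. In your digit embedding, adjacency would require $w_r-w_\ell$ to equal $d^2\bmod p$ as integers rather than mod $p$, which breaks the clean point--line translation.
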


\begin{restatable}{theorem}{parabolafour}\label{thm:parabola4}
     Let $E: \sum_{i=1}^4 c_ix_i = 0$ be such that $c_1+c_2+c_3 + c_4 = 0$ and $\frac{c_1c_2}{c_3c_4}$ is not a square in $\QQ$. For infinitely many $N$ coprime to $c_1c_2c_3c_4$, there exists $A \subset \ZZ_N$ such that
     $$
     A \text{ is Sidon}, \quad \abs{A} = \Omega_E\bigp{\sqrt{N}}, \quad \chi(\ZZ_N,A) = \Omega_E\bigp{N^{1/4}}.
     $$
     Furthermore, $A$ contains no nontrivial solution to $E$.
\end{restatable}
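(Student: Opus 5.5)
The plan is a ``parabola'' construction: a Sidon set built from $x\mapsto x^2$ inside a cyclic group $\ZZ_N\cong \ZZ_M\times\ZZ_p$, with $p$ prime and $\gcd(M,p)=1$.

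\textbf{Construction and size.} Choose a prime $p$ coprime to $c_1c_2c_3c_4$ satisfying $\bigl(\tfrac{c_1c_2c_3c_4}{p}\bigr)=-1$. Since $c_1c_2/(c_3c_4)$ is not a rational square, neither is $c_1c_2c_3c_4$. Hence Dirichlet's theorem together with quadratic reciprocity gives infinitely many such $p$. Let $K=\sum_i\abs{c_i}$ and take $M$ coprime to $p$ with $M>2Kp$ and $M=O_E(p)$. Set $N=Mp$, so $\ZZ_N\cong\ZZ_M\times\ZZ_p$ by CRT. Define
\[
A=\set{(x \bmod M,\ x^2 \bmod p) : x\in\set{0,1,\dots,p-1}}.
\]
Then $\abs{A}=p=\Omega_E(\sqrt N)$. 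Coprimality of $N$ with the $c_i$ is arranged by choosing $M$ with no prime factors dividing $c_1\cdots c_4$.

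\textbf{Sidon property and no solutions to $E$.} Suppose $x_1+x_2=x_3+x_4$ in $\ZZ_M$ with all $x_i\in[0,p)$. Because $M>2p$, this holds as an equation of integers, hence modulo $p$. Combined with $x_1^2+x_2^2\equiv x_3^2+x_4^2 \pmod p$, the standard parabola argument gives $\set{x_1,x_2}=\set{x_3,x_4}$ modulo $p$, and therefore as integers in $[0,p)$.

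For $E$, a solution in $A$ means two things. First, $\sum c_ix_i=0$ over $\ZZ$, since $M>Kp$ rules out wraparound. Second, $\sum c_ix_i^2\equiv0 \pmod p$. Put $y_i=x_i-x_4$. Since $\sum c_i=0$, we have $\sum c_ix_i^2=\sum c_iy_i^2+2x_4\sum c_iy_i$, so the conditions become
\[
\sum_{i\le3}c_iy_i=0,\qquad \sum_{i\le 3}c_iy_i^2\equiv0 \pmod p.
\]
Eliminate $y_3=-(c_1y_1+c_2y_2)/c_3$. This leaves the binary form $c_3(c_1y_1^2+c_2y_2^2)+(c_1y_1+c_2y_2)^2$, whose discriminant is a nonzero multiple of $c_1c_2c_3(c_1+c_2+c_3)=-c_1c_2c_3c_4$, up to squares. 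I expect to verify that the discriminant is $-4c_1c_2c_3c_4$ up to a square factor, which makes this nondegenerate form anisotropic over $\FF_p$ by the choice of $p$. Then $y_1\equiv y_2\equiv0$, so all $x_i$ are congruent mod $p$, hence equal as integers in $[0,p)$. That is a trivial solution, since $\sum c_i=0$.

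\textbf{Chromatic number.} Use Hoffman's bound on the undirected Cayley graph with connection set $A\cup(-A)\setminus\set0$, whose eigenvalues are $2\,\mathrm{Re}\,\widehat A(\xi)$ up to $O(1)$ corrections. This gives $\chi\ge 1+\abs{A}/\abs{\lambda_{\min}}$, so it suffices to show $\abs{\widehat A(a,b)}=O(\sqrt p)$ for all $(a,b)\neq 0$.
\begin{itemize}
\item For $b\neq0$, $\widehat A(a,b)=\sum_{x<p}e(ax/M+bx^2/p)$ is a Gauss-type sum. I would first try Weyl differencing, squaring and summing the resulting geometric series, aiming for $O(\sqrt p)$.
\item For $b=0$, the sum is a geometric series of size $O(M/\mathrm{dist}(a,0))$. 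This is the problematic regime, and $\widehat A(a,0)$ can be about $-p/\pi$ for small $a$.
\end{itemize}

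The $b=0$ case is the main obstacle, and it already breaks the Hoffman route as set up. Since $\lambda_{\min}$ can be of order $p$, the bound $\chi\ge 1+\abs A/\abs{\lambda_{\min}}$ yields only $\chi=\Omega(1)$, not $\Omega(N^{1/4})$. Restricting $x$ to the residue class with a second coordinate does not remove this, and completing the incomplete Gauss sums in the $b\ne0$ case tends to cost an extra $\log p$. The first fix I would try is to change the first coordinate to a linearly independent quadratic-residue structure, so that every nonzero character sum is a genuine complete Gauss sum with exact size $\sqrt p$. If that fails, I would argue directly on the graph: a color class is an independent set $I$, and the $b\neq0$ estimates plus an averaging argument should force $\abs I=O(N^{3/4})$, giving $\chi\ge N/\abs I=\Omega(N^{1/4})$. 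Whether this averaging can actually absorb the large $b=0$ coefficients has not been checked and is the decisive step.
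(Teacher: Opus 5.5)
\textbf{The chromatic number bound is the gap.} Your construction, the size bound, the Sidon property and the solution-freeness argument are sound and essentially match the paper. The paper uses $\ZZ_m\times\FF_p$ with $m=Mp+1$, and checks solution-freeness via the identity $(a_1+a_2)(a_1x_1^2+a_2x_2^2)=(a_1x_1+a_2x_2)^2+a_1a_2(x_1-x_2)^2$ rather than by eliminating $y_3$.

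One sign needs fixing. Your form $c_1(c_1+c_3)y_1^2+2c_1c_2y_1y_2+c_2(c_2+c_3)y_2^2$ has $b^2-4ac=4c_1c_2c_3c_4$, not $-4c_1c_2c_3c_4$. Anisotropy over $\FF_p$ is equivalent to $b^2-4ac$ being a nonsquare. So your choice $\bigl(\frac{c_1c_2c_3c_4}{p}\bigr)=-1$ is exactly what is needed; with the sign you wrote, it would fail whenever $p\equiv 3\pmod 4$.

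You leave $\chi(\ZZ_N,A)=\Omega_E(N^{1/4})$ unproved, and your fallback averaging argument is not specified. You are right that Hoffman's bound on the whole graph cannot work: the frequencies $(a,0)$ see the interval structure of the first coordinate and produce eigenvalues of order $-p$. Your first fix would give up that interval structure, and your no-wraparound lifting to $\ZZ$ depends on it.

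The missing idea is to localize so that the interval constraint disappears. Take a proper coloring with $c\sqrt{p}$ colors and restrict it to the window $W=\{0,\dots,p-1\}\times\FF_p$. Identify $W$ with $\FF_p^2$ via $x\mapsto x \bmod p$. Some color class $I\subseteq W$ has $\abs{I}\ge p^{3/2}/c$. Each column $\{x\}\times\FF_p$ holds only $p$ points, so you can pick a threshold $t$ and split $I$ into $L=\{x\le t\}$ and $R=\{x>t\}$ with $\abs{L},\abs{R}\ge\abs{I}/3$.

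For $\ell\in L$ and $r\in R$, the first coordinate of $r-\ell$ automatically lies in $\{1,\dots,p-1\}$, with no wraparound since $M>p$. Hence $r-\ell\in A$ exactly when $y_r-y_\ell=(x_r-x_\ell)^2$ in $\FF_p$. So the $L$--$R$ edges are precisely those of the Cayley graph on $\FF_p^2$ generated by the full punctured parabola $P=\{(z,z^2):z\in\FF_p^\times\}$.

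Every nontrivial Fourier coefficient of $P$ has modulus at most $\sqrt{p}+1$. It is either a complete Gauss sum minus the $z=0$ term, or it equals $-1$ when the second frequency vanishes. So the bipartite expander mixing lemma gives
\[
e(L,R)\ \ge\ \frac{(p-1)\abs{L}\abs{R}}{p^2}-(\sqrt{p}+1)\sqrt{\abs{L}\abs{R}}\ >\ 0
\]
once $\abs{L},\abs{R}\gg p^{3/2}$. This contradicts independence when $c$ is small. Therefore $\chi\ge \Omega(\sqrt{p})=\Omega_E(N^{1/4})$.

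The paper runs the same argument in the language of incidences. The substitution $z=y-x^2$ turns $L$ into lines and $R$ into points, and the Vinh/Cilleruelo bound $I(\mathcal{P},\mathcal{L})=\abs{\mathcal{P}}\abs{\mathcal{L}}/p+O\bigl(\sqrt{p\abs{\mathcal{P}}\abs{\mathcal{L}}}\bigr)$ forces an incidence.
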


Both chromatic number lower bounds are proved using a point-line incidence bound over $\FF_p^2$ for prime $p$, but could also be proved with spectral methods.

More generally, the dichotomy does not hold for any equation with no zero-sum subset of at least three coefficients, including those lacking translation invariance. 

\begin{restatable}{theorem}{twosum}\label{thm:twosum}
     Let $E: \sum_{i=1}^k c_i x_i = 0$ be such that there does not exist $I\subseteq [k]$ with $\abs{I}\ge 3$ and $\sum_{i\in I} c_i=0$. For infinitely many $N$ coprime to $c_1 \cdots c_k$, there exists $A \subset \ZZ_N$ such that
     $$
     A \text{ is Sidon}, \quad \abs{A} = \Omega_E\bigp{\sqrt{N}}, \quad \chi(\ZZ_N,A) = N^{\Omega_E(1)}.
     $$
     Furthermore, $A$ contains no nontrivial solution to $E$.
\end{restatable}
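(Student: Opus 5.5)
The plan is to combine a Fourier-pseudorandom Sidon set with the arithmetic feature of $E$ singled out in the hypothesis. Let $s \coloneqq \sum_{i=1}^k c_i$. If $k \le 2$, the equation is $c_1x_1=0$ or $c_1x_1+c_2x_2=0$ with $c_1+c_2\ne 0$. Any large Sidon set with large chromatic number then works after deleting $O_E(1)$ elements (for instance $0$ and the elements lying on the finitely many "lines" $c_1x_1=-c_2x_2$, which is possible because $N$ is coprime to the coefficients and a Sidon set meets each such dilation relation $O(1)$ times per element). So assume $k\ge 3$. Then $I=[k]$ has $\abs{I}\ge 3$, so $s\ne 0$.

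The key observation concerns a solution with repeated values. Merging equal variables gives an equation $\sum_j d_j y_j=0$ in distinct $y_j$, where each $d_j$ is a sum of a block of the $c_i$. The solution is nontrivial exactly when some $d_j\neq 0$, and in every case $\sum_j d_j = s\neq 0$. So it suffices to build $A$ so that no equation $\sum_j d_j y_j = 0$ with $\sum_j d_j = s$ and $\sum_j\abs{d_j}\le \sum_i \abs{c_i}$ has a solution in distinct elements of $A$. Only finitely many such coefficient vectors occur, depending on $E$ alone.

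\textbf{Step 1 (the base Sidon set).} I take a Singer difference set $B\subset \ZZ_n$ with $n=q^2+q+1$ and $\abs{B}=q+1$. It satisfies $\abs{\widehat B(\xi)}=\sqrt q$ for all $\xi\ne 0$. By the expander-mixing (Hoffman-type) bound, if $X,Y\subset\ZZ_n$ have no edge of $(\ZZ_n,B)$ between them, then $\sqrt{\abs X\abs Y}\lesssim n/\sqrt q$.

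\textbf{Step 2 (killing solutions through a second coordinate).} Put $N=mn$ with $m$ a prime coprime to $n$ and to the $c_i$, so that $\ZZ_N\cong \ZZ_m\times\ZZ_n$. Set $A=\{(\varphi(b),b): b\in B\}$, where $\varphi\colon B\to U\subset\ZZ_m$. Sidonicity is inherited from the second coordinate. A nontrivial solution projects to $\sum_j d_j\varphi(b_j)=0$ in $\ZZ_m$. I choose $U$ so that such sums never vanish. The first candidate is an arc of length $\eta m$ centred at $m/(2s)$, with $\eta$ small in terms of $\sum\abs{c_i}$. Then $\sum_j d_j u_j \approx s\cdot m/(2s) = m/2$ up to an error below $m/4$, which is nonzero mod $m$.

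\textbf{Step 3 (chromatic lower bound).} Let $\mathrm{supp}$ denote the set of slices $t\in\ZZ_m$. Two slices $t$ and $t'$ with $t'-t\in U$ are joined by the edges coming from $B_{u}=\varphi^{-1}(u)$. If $\varphi$ is chosen pseudorandomly, each $B_u$ inherits Fourier bounds from $B$, up to a loss I would control by a second-moment or random choice. By Step 1, in any independent set the slices of density $\ge q^{-1/2+o(1)}$ then form an independent set in $(\ZZ_m,U)$. In an $r$-colouring, every slice has some colour class of density $\ge 1/r$ in it. Sending each slice to such a colour therefore properly $r$-colours $(\ZZ_m,U)$. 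This gives $\chi(\ZZ_N,A)\ge \min\bigl(\chi(\ZZ_m,U),\,q^{1/2-o(1)}\bigr)$.

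\textbf{Main obstacle.} An arc $U$ has bounded chromatic number: colour $\ZZ_m$ by blocks. Moreover, the projection $\ZZ_N\to\ZZ_m$ is a graph homomorphism, so $\chi(\ZZ_N,A)\le\chi(\ZZ_m,U)$. Hence the linear "first-coordinate" device can never by itself produce unbounded $\chi$, and the size requirement $\abs A=\Omega(\sqrt N)$ forces $m=O_E(1)$. The real work is to replace the arc by a solution-free $U$ that still has chromatic number $N^{\Omega(1)}$. I would first try to take $U$ to be the dense construction behind the reverse direction of \Cref{thm:zhang}, which presumably is a high-dimensional Bohr-type set of Borsuk/Kneser flavour with no nontrivial solutions to the merged equations. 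I would then replace the product with $A=B\cap W$ inside a single cyclic group, with $W$ that Bohr set, using the pseudorandomness of $B$ to get $\abs{B\cap W}\approx \mu(W)\abs B$ and to transfer the topological chromatic lower bound of $(\ZZ_n,W)$ to $(\ZZ_n,B\cap W)$. The delicate point is to keep $\mu(W)$ bounded below in terms of $E$ while $\chi(\ZZ_n,W)$ grows polynomially. If that fails, the fallback is a nonlinear, parabola-type Sidon set, in the spirit of \Cref{thm:parabola3} and \Cref{thm:parabola4}, in which the quadratic coordinate rather than a linear projection excludes the solutions.
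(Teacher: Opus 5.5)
\textbf{There is a genuine gap: your proposal never reaches a construction.} As you note yourself, for $A=\set{(\varphi(b),b):b\in B}$ the projection $\ZZ_N\to\ZZ_m$ is a graph homomorphism, and $\abs{A}=\Omega(\sqrt N)$ forces $m=O_E(1)$. So $\chi(\ZZ_N,A)\le m=O_E(1)$ for every $U$ as in Step 2, and Steps 1--3 prove nothing about the chromatic number.

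The two remedies you offer are only suggestions:
\begin{itemize}
\item For $A=B\cap W$ you would need a solution-free $W$ of density $\gg_E 1$ with $\chi(\ZZ_n,W)=n^{\Omega(1)}$. You would also need a reason why that chromatic number survives passing to the much sparser generating set $B\cap W$. Pseudorandomness of $B$ controls the count $\abs{B\cap W}$, not colourings of $(\ZZ_n,B\cap W)$.
\item The parabola argument of \Cref{thm:parabola3} relies on translation invariance. It multiplies the quadratic relation by $a_1+a_2=-a_3$ to reach $a_1a_2(x_1-x_2)^2=0$, and $\sum_i c_i=0$ is exactly what fails here.
\end{itemize}

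There is also a structural reason Step 2 had to fail. Its exclusion argument uses only $s\neq 0$. After shrinking $\eta$, it therefore also excludes every solution of $x_1+x_2+x_3-x_4-x_5-x_6+sx_7=0$. For that equation \Cref{thm:dichotomy} forces bounded chromatic number on Sidon sets of size $\Omega(\sqrt N)$. So the hypothesis that no $I$ with $\abs{I}\ge 3$ has zero coefficient sum must be used essentially, and Steps 1--3 never use it.

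Your $k\le 2$ reduction is also off. For $k=2$ the hypothesis is vacuous, so $c_1+c_2=0$ is allowed. Deleting $O_E(1)$ elements also does not remove dilation pairs: a Singer set with $D=2D$ contains the solution $(2x,x)$ of $x_1=2x_2$ for every $x\in D$.

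\textbf{The missing idea is to decouple size from chromatic number.} The paper works in $\ZZ_r\times\FF_p\times\ZZ_M$ with $p-1=dr$ and takes $A=P\cup Q$.
\begin{itemize}
\item $P=\set{(x,\phi(x),1):x\in\ZZ_r}$, where $\phi\colon\ZZ_r\to H\le\FF_p^\times$ is an isomorphism. This is a Spence-type Sidon set and supplies $\abs{A}=\Omega_E(\sqrt N)$.
\item $Q=\set{(u,0,0):u\in R}$, where $R\subset\ZZ_r$ is a random set of density $r^{1/M-1}$. It has negligible size, but by \Cref{lem:randomsidon} it is Sidon, free of nontrivial solutions to every equation in $\cL(M)$, and satisfies $\chi(\ZZ_r,R)=\widetilde\Omega(r^{1/(2M)})$.
\end{itemize}
Because $Q$ sits at $\ZZ_M$-level $0$, your homomorphism obstruction disappears. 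Your Step 2 forbids exactly this, since $0\in U$ would let sums over $U$ vanish. On the vertex set $\ZZ_r\times\set{0}\times\set{0}$, the graph $(\ZZ_N,A)$ induces exactly $(\ZZ_r,R)$.

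The hypothesis enters in the solution count:
\begin{itemize}
\item The $\ZZ_M$-coordinate forces the coefficients on $P$-variables to sum to zero. By hypothesis there are then no $P$-variables, or exactly two with opposite coefficients.
\item The second coordinate is zero on $Q$ and injective on $P$, so it forces those two $P$-variables to coincide.
\item What remains is an equation in $R$ with nonzero coefficient sum, which $R$ avoids.
\end{itemize}
Your merging observation is the right tool, but it belongs to the sparse random part, where it drives the first-moment count.
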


\subsection{Some remaining questions}

We record three natural gaps left by the present work. First, it remains unclear whether there is a translation-invariant four-variable equation for which \Cref{thm:qualprend} holds. An example not ruled out by \Cref{thm:parabola4} is
$$
2(x_1-x_2) = 3(x_3-x_4).
$$
If one desired a Sidon subset $A \subset \ZZ_N$ that was free of all nontrivial solutions to this equation, for instance, it would require that $2(A-A) \cap 3(A-A) = \set{0}$. This seems difficult to ensure if $\abs{A} = \Omega\bigp{\sqrt{N}}$. Second, for \(h\geq 3\), does forbidding an arbitrary translation-invariant equation with more than \(2h\) variables force a \(B_h\) set in \(\ZZ_N\) to have size \(o\bigp{N^{1/h}}\)? An affirmative answer would yield a common strengthening of Theorems \ref{thm:solving} and \ref{thm:sublin}. Third, it remains open whether there is some equation with no zero-sum subset of at least five coefficients for which the dichotomy of \Cref{thm:dichotomy} still holds.

\subsection{Notation and conventions}

Subsets will be identified with their indicator functions. $\widetilde{O}$ and $\widetilde{\Omega}$ will be used with arguments that grow at least polynomially to suppress factors that only grow polylogarithmically. $\ZZ_N$ will denote the cyclic group of order $N$ and $\FF_p$ the finite field of order $p$ for prime $p$. For $f: \ZZ_N \to \CC$ define $\widehat{f}: \ZZ_N \to \CC$ by
$$
\widehat{f}(\xi) \coloneqq \sum_{n \in \ZZ_N} f(n)e_N(-\xi n)
$$
where $e_N(\cdot) = e^{2\pi i \cdot/N}$. Denote
$$
\norm{f}^p_p \coloneqq \sum_{n \in \ZZ_N} \abs{f(n)}^p, \quad \norm{f}^p_{L^p} \coloneqq \EE_{n \in \ZZ_N} \abs{f(n)}^p.
$$
For $S \subset \ZZ_N$, let the representation function and energy of $S$ be
$$
r_h(x) \coloneqq \abs{\set{(s_1,\dots,s_h)\in S^h:s_1+\cdots+s_h=x}}, \quad E_h(S) \coloneqq \norm{r_h}_2^2.
$$
For $f,g: \ZZ_N \to \CC$ denote $f \ast g(x) \coloneqq \sum_{n \in \ZZ_N} f(n)g(x-n)$, so that also
$$
r_h(x) = S \ast \dots \ast S(x),
$$
where convolution is repeated $h$ times.

\section{Energy and pseudorandomness}\label{sec:ingredients}

This section collects some useful Fourier properties of $B_h$ sets and proves \Cref{prop:psd}. The lemmas will be used in full generality in \Cref{sec:lineqbh} and with $h=2$ in Sections \ref{sec:sidondichotomy} and \ref{sec:sidonconstructions}. We start with a simple consequence of Fourier inversion that is nonetheless convenient to state in a packaged form.

\begin{lemma}[\cite{chromthresh}*{Lem.~2.6}]\label{lem:inversioncount}
    Consider any equation $\sum_{i=1}^{k} c_i x_i = t$ with $t\in \ZZ_N$. Then the number of solutions in $S \subseteq \ZZ_N$ is
    $$
    \EE_{\xi\in \ZZ_N}\bigb{e_N\bigp{t\xi} \prod_{i=1}^{k} \widehat{S}(c_i\xi)}.
    $$
\end{lemma}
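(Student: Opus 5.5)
The plan is to write the number of solutions as a sum over all tuples and then insert the orthogonality relation for characters of $\ZZ_N$. The number of solutions in $S$ equals
\[
\sum_{x_1,\dots,x_k\in\ZZ_N} \prod_{i=1}^k S(x_i)\,\mathbf{1}\Bigl[\textstyle\sum_i c_ix_i = t\Bigr].
\]
I will replace the indicator by
\[
\mathbf{1}[y=0]=\EE_{\xi\in\ZZ_N} e_N(\xi y), \qquad y = t-\sum_i c_i x_i .
\]
This gives
\[
\EE_{\xi}\, e_N(t\xi)\sum_{x_1,\dots,x_k}\prod_{i=1}^k S(x_i)\,e_N(-c_i\xi x_i).
\]

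Next I will exchange the finite sums. The inner sum then factors over $i$, and each factor is $\sum_{x} S(x)e_N(-(c_i\xi)x)=\widehat{S}(c_i\xi)$ by the definition of the Fourier transform given in the notation section. Substituting these factors back yields exactly the claimed expression
\[
\EE_{\xi\in\ZZ_N}\bigl[e_N(t\xi)\prod_{i=1}^k\widehat{S}(c_i\xi)\bigr].
\]

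The only point needing care is the sign convention. With $\widehat{f}(\xi)=\sum_n f(n)e_N(-\xi n)$, the indicator must be written as $\EE_\xi e_N(\xi(t-\sum_i c_ix_i))$ rather than with the opposite sign; otherwise one obtains $e_N(-t\xi)$ and conjugate transforms. Those agree only after the change of variables $\xi\mapsto-\xi$. No other obstacle is expected, and no coprimality of the $c_i$ with $N$ is needed, since $c_i\xi$ is simply an element of $\ZZ_N$.
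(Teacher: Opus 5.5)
Your proof is correct: you write the count as a sum over $S^k$, insert the orthogonality relation $\EE_{\xi\in\ZZ_N} e_N(\xi y)=1$ if $y=0$ and $0$ otherwise, with $y=t-\sum_i c_i x_i$, and factor the sum. Under the paper's convention $\widehat{S}(\xi)=\sum_n S(n)e_N(-\xi n)$ this gives exactly the stated formula. The paper does not prove the lemma itself; it cites it from \cite{chromthresh} as a simple consequence of Fourier inversion, which is the same orthogonality argument you give.
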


\begin{lemma}[Energy control]\label{lem:energy}
Let $S\subset \ZZ_N$ be a $B_h$ set for $h \ge 2$. Then
\[
    \|\widehat S\|_{L^{2h}} \le (h!)^{1/(2h)}\sqrt{\abs{S}}.
\]
\end{lemma}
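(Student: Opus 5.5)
By Parseval applied to the $h$-fold convolution $r_h = S\ast\cdots\ast S$, we have $\widehat{r_h} = \widehat{S}^h$. Hence
\[
\|\widehat S\|_{L^{2h}}^{2h} = \EE_{\xi\in\ZZ_N} \abs{\widehat S(\xi)}^{2h} = \EE_{\xi} \abs{\widehat{r_h}(\xi)}^2 = \sum_{x\in\ZZ_N} r_h(x)^2 = E_h(S).
\]
It therefore suffices to show that $E_h(S)\le h!\,\abs{S}^h$. Taking $2h$-th roots then gives exactly $\|\widehat S\|_{L^{2h}} \le (h!)^{1/(2h)}\sqrt{\abs{S}}$.

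To bound the energy, we count $E_h(S)$ directly as the number of pairs of ordered $h$-tuples $(s_1,\dots,s_h),(t_1,\dots,t_h)\in S^h$ with $s_1+\cdots+s_h = t_1+\cdots+t_h$. By the $B_h$ property, any such equality forces equality of multisets $\{s_1,\dots,s_h\}=\{t_1,\dots,t_h\}$. So once $(s_1,\dots,s_h)$ is fixed, the tuple $(t_1,\dots,t_h)$ must be a rearrangement of it, and there are at most $h!$ such rearrangements. This gives $E_h(S)\le h!\cdot\abs{S}^h$. An equivalent route is to note that $r_h(x)\le h!$ for every $x$, since all representations of $x$ are permutations of a single multiset, while $\sum_x r_h(x)=\abs{S}^h$; then $\sum_x r_h(x)^2 \le h!\sum_x r_h(x)$.

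There is no real obstacle here. The only points needing care are the normalization conventions: $\|\cdot\|_{L^{2h}}$ is an average over $\xi$, while $E_h$ is defined as a sum over $x$, and Parseval with $\widehat f(\xi)=\sum_n f(n)e_N(-\xi n)$ reads $\EE_\xi\abs{\widehat f(\xi)}^2=\sum_n\abs{f(n)}^2$, so the two sides match exactly with no stray factor of $N$. One should also note that repeated elements cause no trouble. When the multiset has repetitions, the number of distinct rearrangements is strictly less than $h!$, so the bound $h!$ is an overestimate and remains valid. The inequality is tight up to lower-order terms, since most $h$-tuples have distinct entries.
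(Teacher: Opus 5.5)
Your proof is correct and is the same as the paper's: Parseval gives $\|\widehat S\|_{L^{2h}}^{2h}=\sum_x r_h(x)^2$, and the $B_h$ property gives $r_h(x)\le h!$, so $\sum_x r_h(x)^2\le h!\sum_x r_h(x)=h!\abs{S}^h$. Your first route, which fixes one $h$-tuple and counts its at most $h!$ rearrangements, is the same bound counted in a different order.
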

\begin{proof}
By Parseval,
\[
    \mathbb E_{\xi\in \ZZ_N}\abs{\widehat S(\xi)}^{2h}
    =\sum_{x\in \ZZ_N}r_h(x)^2.
\]
Since $S$ is $B_h$, all representations counted by $r_h(x)$ are permutations of a single multiset, so $r_h(x)\le h!$ for every $x$. Therefore
\[
    \|\widehat S\|_{L^{2h}}^{2h}
    =\sum_{x\in \ZZ_N}r_h(x)^2
    \le h!\sum_{x\in \ZZ_N}r_h(x)
    =h!\abs{S}^h.
\]
\end{proof}

\begin{lemma}[Solution bound]\label{lem:freeze}
    Let $S \subset \ZZ_N$ be a $B_h$ set for $h \ge 2$. Consider any equation $\sum_{i=1}^{2h-1} c_i x_i = t$ with $t\in \ZZ_N$ and $c_1 \cdots c_{2h-1}$ coprime to $N$. Then the number of solutions in $S$ is $O\bigp{N^{1-\frac{1}{2h}}}$.
\end{lemma}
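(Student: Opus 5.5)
We will count solutions with \Cref{lem:inversioncount} and bound the resulting Fourier average by H\"older's inequality together with \Cref{lem:energy}. By \Cref{lem:inversioncount}, the number of solutions in $S$ is
\[
\EE_{\xi\in\ZZ_N}\Bigl[e_N(t\xi)\prod_{i=1}^{2h-1}\widehat S(c_i\xi)\Bigr]
\le \EE_{\xi\in\ZZ_N}\prod_{i=1}^{2h-1}\bigl|\widehat S(c_i\xi)\bigr|.
\]
We apply H\"older's inequality with $2h-1$ factors, each in $L^{2h-1}$. Since $c_i$ is coprime to $N$, the map $\xi\mapsto c_i\xi$ is a bijection of $\ZZ_N$. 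Hence each factor satisfies $\|\widehat S(c_i\,\cdot)\|_{L^{2h-1}}=\|\widehat S\|_{L^{2h-1}}$, and the count is at most $\|\widehat S\|_{L^{2h-1}}^{2h-1}$.

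The main step is to bound $\|\widehat S\|_{L^{2h-1}}^{2h-1}$, since this exponent is odd and does not directly match the energy. We interpolate between $L^\infty$ and $L^{2h}$:
\[
\EE_\xi|\widehat S(\xi)|^{2h-1}\le \|\widehat S\|_{L^{2h}}^{2h-1}
\]
by monotonicity of $L^p$ norms with respect to the probability measure $\EE$. By \Cref{lem:energy}, this is at most $(h!)^{(2h-1)/(2h)}|S|^{(2h-1)/2}$. Using $|S|=O(N^{1/h})$, we get $|S|^{(2h-1)/2}=O\bigl(N^{(2h-1)/(2h)}\bigr)=O\bigl(N^{1-\frac1{2h}}\bigr)$, which is the claimed bound.

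This argument appears to go through directly. The only point to check is that the normalized norms are the right ones: Jensen's inequality in the form $\|f\|_{L^{p}}\le\|f\|_{L^{q}}$ for $p\le q$ holds for the averaged norms, which is exactly how \Cref{lem:energy} is stated. The bound $|S|=O_h(N^{1/h})$ for $B_h$ sets was recorded in the introduction. As a sanity check, the trivial bound $|S|^{2h-2}\approx N^{2-2/h}$ from fixing $2h-2$ variables is weaker than the claimed bound for $h\ge2$, which is consistent with a Fourier argument being needed.
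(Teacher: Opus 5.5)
Your proposal is correct and matches the paper's proof essentially step for step. Both use the inversion formula, H\"older with $2h-1$ factors in $L^{2h-1}$, monotonicity of the normalized norms to pass to $L^{2h}$, then \Cref{lem:energy} and $\abs{S}=O(N^{1/h})$, with coprimality making $\xi\mapsto c_i\xi$ a bijection. The step you call interpolation between $L^\infty$ and $L^{2h}$ is really just monotonicity of normalized norms, which is what you actually use.
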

\begin{proof}
   By \Cref{lem:inversioncount}, the number of solutions in $S$ is
\begin{align*}
   \EE_{\xi\in \ZZ_N}\bigb{e_N\bigp{t\xi} \prod_{i=1}^{2h-1} \widehat{S}(c_i\xi)} &\le \prod_{i=1}^{2h-1} \bigp{\EE_{\xi\in \ZZ_N}\abs{\widehat{S}(c_i\xi)}^{2h-1}}^{\frac{1}{2h-1}}\\ &\le \prod_{i=1}^{2h-1} \bigp{\EE_{\xi\in \ZZ_N}\abs{\widehat{S}(c_i\xi)}^{2h}}^{\frac{1}{2h}}
\end{align*}
where both inequalities are by convexity. \Cref{lem:energy} then gives that this is $O\bigp{\abs{S}^{\frac{2h-1}{2}}} = O\bigp{N^{1-\frac{1}{2h}}}$, where the coprime assumption is used to guarantee that multiplication by $c_i$ is an automorphism of $\ZZ_N$ for every $i$.
\end{proof}

The result below establishes the key pseudorandomness of extremal $B_h$ sets. In the case of $h=2$ and working over $[N]$ this was already observed in \cite{exsidon}, with a proof using van der Corput differencing. Our proof partly mirrors that of the $C_hN^{1/h} + O_h(1)$ upper bound on the size of $B_h$ subsets of $\ZZ_N$, in particular the consideration of mixed sums and differences. It can essentially be seen as the observation that tightness in the upper bound forces pseudorandomness. Like the pseudorandomness of extremal Sidon sets established in \cite{exsidon}, there are likely other applications beyond the one presented here.

\pseud*

\begin{proof}
Denote $r \coloneqq \ceil{h/2}$ and $s \coloneqq \floor{h/2}$.
\begin{align*}
    \abs{\widehat{S}(\xi)}^h = \abs{\widehat{S}(\xi)^r \overline{\widehat{S}(\xi)}^s} = \abs{\sum_{\substack{x_1,\dots,x_r \in S \\ y_1,\dots,y_s \in S}} \ep{\xi \bigp{\sum_{i=1}^r x_i - \sum_{j=1}^s y_j}}},
\end{align*}
where the sum is over ordered tuples. Consider splitting this sum in two, one term (denoted $A$) corresponding to $x_1,\dots,x_r,y_1,\dots,y_s \in S$ all distinct and the other (denoted $B$) summing over tuples with at least one repeated coordinate. $B$ is over only $O_h(\abs{S}^{h-1})$ terms, so it is a lower order contribution. For $A$, define
$$
D \coloneqq \set{t: t = \sum_{i=1}^r x_i - \sum_{j=1}^s y_j \text{ for distinct } x_1,\dots,x_r,y_1,\dots,y_s \in S},
$$
so that
$$
A = \sum_{t \in D} \abs{m(t)} \ep{\xi t}, \hspace{0.7em} m(t) \coloneqq \set{(x_1,\dots,x_r,y_1,\dots,y_s) \in S \text{ distinct}: \sum_{i=1}^r x_i - \sum_{j=1}^s y_j = t}.
$$

For any $t \in D$, consider two tuples $(x_1,\dots,x_r,y_1,\dots,y_s)$ and $(x'_1,\dots,x'_r,y'_1,\dots,y'_s)$ in $m(t)$.
Then rearranging gives
$$
\sum_{i=1}^r x_i + \sum_{j=1}^s y'_j = \sum_{i=1}^r x'_i + \sum_{j=1}^s y_j.
$$
Because $S$ is $B_h$, we have equality of the multisets
$$
\set{x_1,\dots,x_r,y'_1,\dots,y'_s} = \set{x'_1,\dots,x'_r,y_1,\dots,y_s}.
$$
But since both tuples have only distinct elements we actually have equality of the sets
$$
\set{x_1,\dots,x_r} = \set{x'_1,\dots,x'_r}, \quad \set{y_1,\dots,y_s} = \set{y'_1,\dots,y'_s}.
$$
We thus have a unique set of $r$ elements along with a unique disjoint set of $s$ elements identified by any $t \in D$, so $\abs{m(t)} = r!s!$. Thus $A = r!s! \sum_{t \in D} \ep{\xi t}$. Now by orthogonality of characters,
$$
\abs{\sum_{t \in D} \ep{\xi t}} = \abs{\sum_{t \notin D} \ep{\xi t}} \le N - \abs{D} \le N - \frac{1}{r! s!}\abs{S}(\abs{S}-1) \cdots (\abs{S} - h + 1).
$$
The last inequality is by counting tuples $(x_1,\dots,x_r,y_1,\dots,y_s) \in S$ of distinct elements and then dividing out by the number that give the same sum ($r!s!$ by the same reasoning above). Thus, in total,
\begin{align*}
    \abs{\widehat{S}(\xi)}^h \le \abs{A} + \abs{B} &= r!s! N - \abs{S}\bigp{\abs{S}-1} \cdots (\abs{S} - h + 1) + O_h\bigp{\abs{S}^{h-1}}\\
    &= r!s!N - \abs{S}^h + O_h\bigp{\abs{S}^{h-1}}.
\end{align*}
\end{proof}

\section{Linear equations in $B_h$ sets}\label{sec:lineqbh}

With the ingredients in \Cref{sec:ingredients}, the first main result can be proved. Let us first make explicit the constant $D_{h,k}$ guaranteed by the result. Let $\lambda_{h,k}\in(0,1)$ be the unique solution to
$$
\lambda^{k-2h}=\bigp{\frac{C_h^h}{h!}}^h(1-\lambda)^{k-h}.
$$
Then $D_{h,k} \coloneqq C_h(1-\lambda_{h,k})^{1/h}$.

\begin{proof}[Proof of \Cref{thm:solving}]
Fix $D\in(D_{h,k},C_h)$, and suppose $\alpha \coloneqq \abs{S}N^{-1/h} \ge D$. Define
$$
\delta\coloneqq \bigp{C_h^h-D^h}^{1/h}, \quad \delta_{h,k} \coloneqq \bigp{C_h^h-D_{h,k}^h}^{1/h}.
$$
Since $D>D_{h,k}$, equivalently $\delta<\delta_{h,k}$, it holds by the definition of $D_{h,k}$ that
$$
\gamma\coloneqq D^h\bigp{D^{k-h}-h!\delta^{k-2h}} > D^h\bigp{D_{h,k}^{k-h}-h!\delta_{h,k}^{k-2h}} = 0.
$$
For every nonzero $\xi\in \ZZ_N$, \Cref{prop:psd} gives
$$
\abs{\widehat S(\xi)}\le (\delta+o(1))N^{1/h}.
$$
Let $T$ be the number of solutions $x_1,\dots,x_k \in S$ to $E$. By \Cref{lem:inversioncount},
$$
T=\frac{\abs{S}^k}{N}+R,
\quad
R\coloneqq \frac1 N\sum_{0 \neq \xi \in \ZZ_N}\widehat S(c_1\xi)\cdots \widehat S(c_k\xi).
$$
Pulling out $k-2h$ nonzero-frequency factors, applying convexity to the remaining $2h$ factors, and invoking \Cref{lem:energy},
\begin{align*}
\abs{R}
&\le \bigp{(\delta+o(1))N^{1/h}}^{k-2h}
\prod_{j=k-2h+1}^k
\bigp{\EE_{\xi\in\ZZ_N}\abs{\widehat S(c_j\xi)}^{2h}}^{1/(2h)} \\
&= \bigp{\delta+o(1)}^{k-2h}N^{\frac{k-2h}{h}}
\|\widehat S\|_{L^{2h}}^{2h} \\
&\le h!\bigp{\delta+o(1)}^{k-2h}\abs{S}^h N^{\frac{k-2h}{h}},
\end{align*}
where the coprime assumption is used to make multiplication by each $c_j$ a bijection. Thus
\begin{align*}
T
&\ge \alpha^k N^{\frac{k}{h}-1}
-h!\bigp{\delta+o(1)}^{k-2h}\alpha^h N^{\frac{k}{h}-1} \ge \bigp{\gamma-o(1)}N^{\frac{k}{h}-1}.
\end{align*}
In particular, $T\ge \frac{\gamma}{2}N^{\frac{k}{h}-1}$ for sufficiently large $N$.

It remains to count solutions without pairwise distinct coordinates. Such a solution has $x_i=x_{j}$ for some $i \ne j$. Fix this common value and $k-2h-1$ of the remaining variables, leaving $2h-1$ unfixed (recall $k \ge 2h+1$). The remaining $2h-1$ variables satisfy a nontrivial linear equation, so by \Cref{lem:freeze} there are $O_E(N^{1-\frac{1}{2h}})$ choices for them. Thus the total number of such solutions is
$$
O_E\bigp{\abs{S}^{k-2h}N^{1-\frac{1}{2h}}}
=O_E\bigp{N^{\frac{k}{h}-1-\frac{1}{2h}}}.
$$
For sufficiently large $N$, this is smaller than $T$, so $S$ contains a solution to $E$ with pairwise distinct coordinates, a contradiction. Therefore $\abs{S}<DN^{1/h}$ for every fixed $D>D_{h,k}$ and all sufficiently large $N$, which is equivalent to
$$
\abs{S}\le \bigp{D_{h,k} +o(1)}N^{1/h}.
$$
\end{proof}

The second main result needs an additional supersaturation input, but will not use \Cref{prop:psd}. Here is a supersaturated strengthening of \Cref{thm:qualvar}. 

\begin{theorem}[Varnavides \cite{varsat}]\label{thm:quantvar}
    Let $E: \sum_{i=1}^k c_i x_i = 0$ be such that $k\ge 3$ and
$\sum_i c_i =0$. For every $\eps>0$ there exists $\delta=\delta(\eps,E)>0$ such that the following holds. For any positive integer $N$ and any $A\subset \ZZ_N$ with size at least $\eps N$, there exists at least $\delta N^{k-1}$ solutions to $E$ in $A$.
\end{theorem}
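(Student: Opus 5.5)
The plan is to deduce this from an arithmetic removal lemma, and to use translation invariance only through the existence of trivial solutions. Call a solution \emph{constant} if $x_1=\dots=x_k$. Since $\sum_i c_i=0$, every $x\in A$ gives the constant solution $(x,\dots,x)$ of $E$. So if even one element of $A$ survives after we delete all solutions, we have a contradiction. This reduces the statement to: few solutions implies all solutions can be destroyed cheaply.

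\emph{Removal step.} I would invoke the arithmetic removal lemma for a single linear equation over finite abelian groups (Green for cyclic groups; Kr\'al'--Serra--Vena in general). It states that for every $\eta>0$ there is $\delta'=\delta'(\eta,E)>0$ with the following property. If $A_1,\dots,A_k\subset \ZZ_N$ admit fewer than $\delta' N^{k-1}$ solutions to $\sum_i c_i x_i=0$ with $x_i\in A_i$, then there are $A_i'\subset A_i$ with $\abs{A_i\setminus A_i'}\le \eta N$ admitting no solutions at all. The proof of that lemma goes through the hypergraph removal lemma applied to the $k$-partite $(k-1)$-uniform hypergraph on vertex set $\ZZ_N^{(1)}\sqcup\dots\sqcup\ZZ_N^{(k)}$. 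Its edges are encoded by writing $x_i$ as a fixed linear form in $k-1$ free variables. I will take it as a black box.

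\emph{Deduction.} Apply the lemma with $A_1=\dots=A_k=A$, $\eta=\eps/(2k)$, and $\delta=\delta'(\eta,E)$. Suppose $A$ has fewer than $\delta N^{k-1}$ solutions. Then we obtain $A_i'$ with no solutions, and $\bigcap_i A_i'$ has size at least $\abs{A}-k\eta N\ge \eps N/2>0$. Any $x$ in the intersection gives the constant solution $(x,\dots,x)\in A_1'\times\dots\times A_k'$, which is a contradiction. Note that the conclusion counts all solutions, not only those with distinct coordinates, so no degeneracy bookkeeping is needed.

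\emph{Main obstacle: coefficients not coprime to $N$.} The statement is for arbitrary $N$, so $\gcd(c_i,N)$ may be nontrivial. I expect this to be the delicate point, because it is where the encoding of $E$ as a hypergraph needs the maps $x\mapsto c_i x$ to behave well. There are two routes.
\begin{itemize}
\item Use the Kr\'al'--Serra--Vena version, which allows arbitrary integer coefficients on an abelian group. There the hypergraph is built on cosets, and the constants depend only on $E$.
\item Alternatively, pass to the integers. Choose representatives in $[N]$ and apply a removal lemma for $E$ in $[N]$, whose solutions automatically reduce mod $N$. Constant solutions are again available there, and the integer solution count lower-bounds the count in $\ZZ_N$.
\end{itemize}
The second route requires checking that an integer-solution density of order $N^{k-1}$ transfers, which holds since each integer solution maps to a solution mod $N$ with bounded multiplicity depending on $E$.

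As a fallback avoiding removal lemmas, I would run the classical averaging argument: embed many random $(k-2)$-dimensional progressions, and in each apply \Cref{thm:qualvar} at a fixed scale $M(\eps,E)$. The difficulty there is ensuring that the solutions found are nondegenerate enough to recover the $k-1$ parameters, so I would pursue the removal route first.
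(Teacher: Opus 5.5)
Your argument is correct, but it takes a different route from the one behind the citation. The paper gives no proof; it invokes Varnavides, whose argument bootstraps the qualitative statement \Cref{thm:qualvar} by averaging. One applies it at a fixed scale $M(\eps,E)$ inside many randomly embedded (generalized) progressions and then controls how often each solution in $\ZZ_N$ is counted. This is essentially the fallback you set aside.

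You instead obtain supersaturation from the arithmetic removal lemma. The only use of $\sum_i c_i=0$ is that the constant tuples $(x,\dots,x)$ are solutions, so a single element surviving in $\bigcap_i A_i'$ gives a contradiction. This is short, avoids any bookkeeping about nondegenerate solutions, and is uniform in $N$. The cost is a much deeper black box, which yields $\delta$ of tower type in $1/\eps$. (For a single equation, Kr\'al'--Serra--Vena only need graph $k$-cycle removal, not hypergraph removal.) Varnavides' averaging, by contrast, is elementary given \Cref{thm:qualvar} and inherits whatever quantitative Roth-type bound one feeds into it (cf.\ \cite{kosciuszkoinvariant}).

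On your ``main obstacle'', two corrections.

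First, the Kr\'al'--Serra--Vena abelian-group version does not allow completely arbitrary coefficients. For a single equation it requires $\gcd(c_1,\dots,c_k)$ to be coprime to $N$. This is harmless: dividing $E$ by $\gcd(c_1,\dots,c_k)$ gives a translation-invariant equation whose solutions are all solutions of $E$.

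Second, your integer route is cleaner still, and it is exactly how the paper transfers Prendiville's result in the proof of \Cref{thm:prendiville}. Embed the representatives $[N]$ into $\ZZ_p$ for a prime $p>(\sum_i\abs{c_i})N$. There every $c_i$ is invertible and no wraparound occurs, so removal gives $\gg_{\eps,E} N^{k-1}$ integer solutions. Reduction mod $N$ is injective on $[N]^k$, so the multiplicity is exactly one rather than merely bounded.
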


See \cite{kosciuszkoinvariant} for a quantitative version. Recall \Cref{def:subdivision} of an $h$-subdivision. The $B_h$ property gives that the $h$-fold sumset of $S$ is dense, so Varnavides guarantees many solutions in the sumset that can be expanded to subdivision solutions.

\begin{proof}[Proof of \Cref{thm:sublin}]
Let \(m:=k/h \ge 3\), so after reindexing we may write
\[
E:\quad d_1(x_{1,1}+\cdots+x_{1,h})+\cdots+d_m(x_{m,1}+\cdots+x_{m,h})=0,
\]
with \(d_1,\dots,d_m\in \mathbb Z\setminus\{0\}\). Furthermore,
\[
d_1+\cdots+d_m=0
\]
since \(E\) is translation invariant.

Suppose for contradiction that \(|S| = \Omega\bigp{N^{1/h}}\). Define
\[
\sigma(T):=\sum_{x\in T}x, \quad 
A:=\set{\sigma(T): \ T\in \binom{S}{h}}.
\]
Note that $\sigma$ is injective on \(\binom{S}{h}\) by the $B_h$ property of $S$, so $|A|=\binom{\abs{S}}{h} = \Omega\bigp{\abs{S}^h} = \Omega(N)$. Then by \Cref{thm:quantvar} for the translation-invariant equation
\begin{equation}\label{eq:grouped}
    d_1a_1+\cdots+d_ma_m=0,
\end{equation}
the set \(A\) contains \(\Omega(N^{m-1})\) solutions \((a_1,\dots,a_m)\in A^m\).

Write each \(a_j\) uniquely as \(a_j=\sigma(T_j)\) with \(T_j\in \binom{S}{h}\). Call such a solution \emph{bad} if \(T_i\cap T_j\neq\varnothing\) for some \(i\neq j\). Fixing an intersecting pair \((T_i,T_j)\) costs \(O(\abs{S}^{2h-1})\) choices and fixes two variables of \eqref{eq:grouped}. Choosing \(m-3\) other variables costs $O(N^{m-3})$. For sufficiently large $N$, the last free variable in \eqref{eq:grouped} has a nonzero coefficient. This leaves only $O_E(1)$ choices for the last variable after the previous choices have been made. Therefore the number of bad solutions is
\[
O\bigl(\abs{S}^{2h-1}N^{m-3}\bigr)
=O\bigl(N^{m-1-1/h}\bigr)
=o(N^{m-1}).
\]
Hence some solution has \(T_1,\dots,T_m\) pairwise disjoint for sufficiently large $N$.

Now, for $1 \le j \le m$, write
\[
T_j=\set{x_{j,1},\dots,x_{j,h}}.
\]
Then
\[
d_1(x_{1,1}+\cdots+x_{1,h})+\cdots+d_m(x_{m,1}+\cdots+x_{m,h})=0,
\]
and all \(mh=k\) coordinates are pairwise distinct, contradicting the hypothesis on \(S\). Thus \(|S|=o\bigp{N^{1/h}}\).

\end{proof}

\section{Sidon dichotomy}\label{sec:sidondichotomy}

We will need a supersaturated strengthening of \Cref{thm:qualprend}. The result below was originally stated for $[N]$ instead of $\ZZ_N$, so we transfer to $\ZZ_N$.

\begin{theorem}[\cite{densesidon}*{Thm.~1.1}]\label{thm:prendiville}
    Let $E: \sum_{i=1}^k c_i x_i = 0$ be such that $k\ge 5$ and
$\sum_{i=1}^k c_i =0$. For every $\eps>0$ there exists $\sigma=\sigma(\eps,E)>0$ such that the following holds. For any positive integer $N$ and any Sidon $A\subset \ZZ_N$ with size at least $\eps \sqrt{N}$, there exists at least $\sigma N^{k/2-1}$ solutions to $E$ in $A$.
\end{theorem}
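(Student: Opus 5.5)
The statement is cited as a black box over $[N]$; the task is to transfer it to $\ZZ_N$ with the loss absorbed into $\sigma$. I plan to embed $\ZZ_N$ into a long interval and partition it into finitely many short arcs. Represent $\ZZ_N$ by $\set{0,1,\dots,N-1}$ and let $K\coloneqq \sum_i \abs{c_i}$. The obstacle is wrap-around: a solution of $E$ in $\ZZ_N$ need not lift to an integer solution, and an integer Sidon violation is fine, but a Sidon set in $\ZZ_N$ lifted to the integers is still Sidon. Indeed, if $a+b=c+d$ in $\ZZ$ with $a,b,c,d$ among our representatives, then the same holds mod $N$, and so $\set{a,b}=\set{c,d}$. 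So Sidon lifts for free, and the real issue is only that the lifted set lives in $[N]$ while we want integer solutions to yield cyclic ones.

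This last direction is trivial, since any integer solution reduces mod $N$ to a solution in $\ZZ_N$, and distinct representatives stay distinct. So the only real point is counting: an integer solution $(x_1,\dots,x_k)$ in $A'\subset[N]$ gives a solution in $A$, and the reduction map is injective on tuples of representatives. I would therefore apply the $[N]$ version of \cite{densesidon}*{Thm.~1.1} directly to the lifted set $A'\subset [N]$, which is Sidon, has size $\abs{A}\ge \eps\sqrt N$, and lies in $[N]$. This yields at least $\sigma' N^{k/2-1}$ integer solutions, hence at least that many solutions in $\ZZ_N$. Here $\sigma'=\sigma'(\eps,E)$ comes from the integer statement; if that statement is phrased with density relative to $\sqrt{N}$ for subsets of $[N]$, nothing changes, and if it is phrased for $[M]$ with $M$ different, one takes $M=N$.

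The only step needing care is matching the exact form of Prendiville's theorem. If it only asserts the count for $N$ sufficiently large (depending on $\eps,E$), then small $N$ are handled trivially. For such $N$, any $A$ with $\abs{A}\ge\eps\sqrt N\ge 1$ contains the constant solution $x_1=\dots=x_k$, which is a solution because $\sum c_i=0$. This gives at least $1\ge \sigma N^{k/2-1}$ solutions once $\sigma$ is shrunk below $N_0^{-(k/2-1)}$. Taking $\sigma$ to be the minimum of the two constants completes the transfer. I expect no genuine obstacle beyond verifying that the Sidon property survives lifting, which is the one-line argument above.
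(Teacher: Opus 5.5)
Your proposal is correct and follows the paper's transfer argument: lift $A$ to its representatives in $[N]$, note that the lift is Sidon because any integer Sidon violation reduces to one mod $N$, apply Prendiville's theorem over $[N]$, and push the integer solutions back down injectively. The differences are cosmetic. The paper checks Prendiville's actual hypothesis, namely additive energy at most $2\abs{B}^2$ (i.e.\ $\eta=0$), which your Sidon lift satisfies automatically. Your handling of small $N$ via the constant solutions $x_1=\dots=x_k$ is a more explicit version of the paper's ``taking $\sigma$ small enough.'' The arcs-partition plan in your first sentence is never used and can be dropped.
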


\begin{proof}
    $A$ is in bijection with a subset $B \subset [N]$ by identifying $\ZZ_N$ with integers mod $N$. Then $\abs{A} = \abs{B}$ and $B$ is Sidon, as any Sidon violation in $B$ would yield one in $A$ after reducing the equation mod $N$. It then holds that the energy
    $$
    \abs{\set{(x_1,x_2,x_3,x_4) \in B^4: x_1 + x_2 = x_3 + x_4}}
    $$
    over the integers is at most $2\abs{B}^2$. Applying \cite{densesidon}*{Thm.~1.1} with $\eta=0$ and taking $\sigma(\eps,E)$ small enough then guarantees at least $\sigma N^{k/2-1}$ solutions to $E$ in $B$. But this implies at least that many solutions in $A$, as any solution over the integers is also a solution mod $N$.
\end{proof}

The proof below follows the strategy of \cite{chromthresh}. Suppose $A \subset \ZZ_N$ is Sidon with $\abs{A} \ge \eps \sqrt{N}$. The key mechanism is to consider the Bohr set generated by the large spectrum of $A$. If $A$ has somewhat large intersection with this Bohr set, then those elements can be combined with the many solutions to the subequation on $I$ guaranteed by \Cref{thm:prendiville} to find a solution to $E$. The assumption that $\abs{I} \ge 5$ is used here both to invoke \Cref{thm:prendiville} and so that the $L^4$ Fourier control of \Cref{lem:energy} can be used to bound an error term from frequencies outside the large spectrum. If $A$ has small intersection with the Bohr set, then the large spectrum may be used to construct a small coloring. The number of colors is a function both of the intersection size and the number of frequencies in the large spectrum, so both need to be bounded. $L^4$ Fourier control is used again here, essentially as a replacement to Parseval to guarantee the large spectrum has bounded cardinality.

\structchar*
\begin{proof}
Let $m:=\abs{I}\ge 5$. By the universal upper bound on the size of any Sidon subset of $\ZZ_N$, since $C_2 = 1$,
$$
\abs{A} \le (1+o(1))\sqrt{N}.
$$ 
By taking $C$ large enough depending on $E$ it suffices to handle only the cases where $\abs{A} \le 2\sqrt{N}$ and all coefficients are nonzero in $\ZZ_N$.

Assume $\abs{A} \ge \eps \sqrt{N}$, so we aim to either find a pairwise distinct solution to $E$ or color $(\ZZ_N,A)$ with at most $C$ colors. Set
\[
\phi_A(\xi):=\frac{1}{2\sqrt{N}}\widehat{1_A}(\xi).
\]
Then for any $\xi \in \ZZ_N$, $\abs{\phi_A(\xi)}\le \frac{\abs{A}}{2\sqrt{N}}\le 1$. Crucially, by \Cref{lem:energy},
\[
\sum_{\xi\in \ZZ_N} \abs{\phi_A(\xi)}^4
 = \frac{1}{16N^2}\sum_{\xi\in \ZZ_N}\abs{\widehat{1_A}(\xi)}^4
 \le \frac{2\abs{A}^2}{16N}
 \le 1.
\]

Let us quickly handle when $I=[k]$, so $m=k$. By \Cref{thm:prendiville}, there exists
$\sigma=\sigma(\varepsilon,E)>0$ such that $A$ contains at least $\sigma N^{k/2-1}$ solutions to $E$. Call a solution \textit{bad} if $x_i=x_{j}$ for $i \neq j \in [k]$. After imposing
this relation, fix all but three other variables (using that $m \ge 5$). \Cref{lem:freeze} with $h=2$ gives at most $O(N^{3/4})$ remaining choices for these three variables. Multiplying back in the number of choices for the $k-5$ fixed variables, $x_i = x_{j}$, and $(i,j) \in [k]^2$, the total number of bad solutions is
\[
O_E\bigl(\abs{A}^{k-4}N^{3/4}\bigr)=O_E\bigl(N^{k/2-5/4}\bigr).
\]
For $N$ sufficiently large (depending only on $\varepsilon$ and $E$), this is smaller than
$\sigma N^{k/2-1}$, so $A$ contains a solution to $E$ with pairwise distinct coordinates. If $N$ is bounded in terms of $\varepsilon$ and $E$, then taking $C$ to be at least this bound gives
\[
\chi\bigl(\ZZ_N,A\bigr)\le N \le C.
\]

Suppose now $I \neq [k]$ and write $J:=[k]\setminus I$. Consider the translation-invariant subequation
\[
E_1:\ \sum_{i\in I} c_i x_i = 0.
\]
By \Cref{thm:prendiville}, there exists $\sigma=\sigma(\varepsilon,E_1)>0$ such that
$A$ contains at least $\sigma N^{m/2-1}$ solutions to $E_1$. Set
\[
\delta:=2^{-m}\sigma,
\quad \nu:=\frac{\delta}{6},
\quad \mathcal E:=\{\xi\in \ZZ_N:\abs{\phi_A(\xi)}\ge \nu\}.
\]
Since $\sum_{\xi}\abs{\phi_A(\xi)}^4\le 1$, we have
\[
\abs{\mathcal E}\le \nu^{-4}.
\]
Choose $s\in I$ arbitrarily, and set
\[
\Gamma:=c_s^{-1}\mathcal E=\{\eta\in \ZZ_N:c_s\eta\in \mathcal E\}.
\]
Then since multiplication by $c_s$ is a bijection on $\ZZ_N$, $\abs{\Gamma}=\abs{\mathcal E}\le \nu^{-4}$. For $x\in \ZZ_N$, write
\[
\normT{x}:=\left\lVert \frac{x}{N}\right\rVert_{\T}
=\min_{n\in \mathbb Z}\abs{\frac{x}{N}-n}.
\]
Now set
\[
D:=\sum_{i\in J}\abs{c_i},
\quad \rho:=\frac{1}{6\pi}D^{-1}\nu^4\delta,
\]
and consider the Bohr set
\[
B=B(\Gamma,\rho):=\{x\in \ZZ_N:\normT{\xi x}\le \rho\ \text{for all }\xi\in \Gamma\}.
\]

\textbf{Case $\abs{A\cap B}\ge k$.} We may choose pairwise distinct elements
$\{x_i:i\in J\}$ with $x_i\in A\cap B$ for each $i\in J$. Set
\[
y:=-\sum_{i\in J} c_i x_i.
\]
Let $N_1,N_2$ denote the number of solutions in $A$ to
\[
\sum_{i\in I} c_i x_i = 0
\quad\text{and}\quad
\sum_{i\in I} c_i x_i = y
\]
respectively. By construction, $N_1\ge \sigma N^{m/2-1}=2^m\delta N^{m/2-1}$. Moreover, \Cref{lem:inversioncount} gives
\[
N_1 = 2^m N^{m/2-1}\sum_{\xi\in \ZZ_N}\prod_{i\in I}\phi_A(c_i\xi),
\quad
N_2 = 2^m N^{m/2-1}\sum_{\xi\in \ZZ_N}\prod_{i\in I}\phi_A(c_i\xi)\ep{y\xi}.
\]
Thus
\[
\frac{\abs{N_1-N_2}}{2^m N^{m/2-1}}
\le \sum_{\xi\in \ZZ_N}\prod_{i\in I}\abs{\phi_A(c_i\xi)}\abs{1-\ep{y\xi}}.
\]
We split the sum according to whether $\xi\in \Gamma$.

If $\xi\in \Gamma$, then using that $\set{x_i: i \in J} \subseteq B$,
\[
\abs{1-\ep{y\xi}}
\le 2\pi\normT{y\xi}
\le 2\pi\sum_{i\in J}\normT{c_i x_i\xi}
\le 2\pi\sum_{i\in J}\abs{c_i}\normT{x_i\xi}
\le 2\pi D\rho
\le \frac13\nu^4\delta.
\]
Combining this with the fact that $\abs{\phi_A}\le 1$,
\[
\sum_{\xi\in \Gamma}\prod_{i\in I}\abs{\phi_A(c_i\xi)}\abs{1-\ep{y\xi}}
\le \abs{\Gamma}\cdot \frac13\nu^4\delta
\le \frac13\delta.
\]

If $\xi\notin \Gamma$, then $\abs{\phi_A(c_s\xi)}<\nu$. Also $\abs{1-\ep{y\xi}}\le 2$, so
\[
\sum_{\xi\notin \Gamma}\prod_{i\in I}\abs{\phi_A(c_i\xi)}\abs{1-\ep{y\xi}}
\le 2\nu\sum_{\xi\in \ZZ_N}\prod_{i\in I\setminus\{s\}}\abs{\phi_A(c_i\xi)}.
\]
By H\"older with exponent $m-1$,
\[
\sum_{\xi\in \ZZ_N}\prod_{i\in I\setminus\{s\}}\abs{\phi_A(c_i\xi)}
\le \prod_{i\in I\setminus\{s\}}\left(\sum_{\xi\in \ZZ_N}\abs{\phi_A(c_i\xi)}^{m-1}\right)^{1/(m-1)}.
\]
Recall now that $m-1\ge 4$, $\abs{\phi_A}\le 1$, and multiplication by $c_i$ is a bijection of $\ZZ_N$ for every $i$. Then each factor is at most
\[
\left(\sum_{\xi\in \ZZ_N}\abs{\phi_A(\xi)}^4\right)^{1/(m-1)}\le 1,
\]
so the whole product is at most $1$. Therefore,
\[
\sum_{\xi\notin \Gamma}\prod_{i\in I}\abs{\phi_A(c_i\xi)}\abs{1-\ep{y\xi}}
\le 2\nu \le \frac13\delta.
\]

Summing both pieces,
\[
\abs{N_1-N_2}\le \frac23\cdot 2^m\delta N^{m/2-1}
=\frac23\sigma N^{m/2-1},
\]
so $N_2\ge \frac13\sigma N^{m/2-1}$.

Call a solution to $\sum_{i\in I} c_i x_i=y$ in $A$ \textit{bad} if either $x_i=x_{i'}$ or $x_i=x_j$ for $i \neq i' \in I, j \in J$. Just as in the case where $I = [k]$, since $\{x_j:j\in J\}$ are considered fixed, using \Cref{lem:freeze} each such family of bad solutions is of size at most
\[
O_E\bigl(\abs{A}^{m-4}N^{3/4}\bigr)=O_E\bigl(N^{m/2-5/4}\bigr).
\]
Since there are only $O_E(1)$ bad relations of the above form, the total number of bad solutions is
$O_E(N^{m/2-5/4})$.

For $N > N_0(\eps,E)$, this is smaller than $\frac13\sigma N^{m/2-1}$. Then there exists a solution to
$\sum_{i\in I} c_i x_i=y$ in $A$ with pairwise distinct coordinates, all distinct from $\{x_j:j\in J\}$. Adding
back the chosen $(x_j)_{j\in J}$, we obtain a solution to $E$ in $A$ with pairwise distinct coordinates. This completes the case where $\abs{A \cap B} \ge k$ and $N > N_0$.

Enlarge $C$ further so that $C \ge N_0$. If $N\le N_0$, then $\chi(\ZZ_N,A)\le N\le C$. Thus for the remaining case we assume $N>N_0$.

\textbf{Case $\abs{A \cap B} < k$.}
 Let
\[
M:=\left\lceil 2\rho^{-1}\right\rceil,
\]
and partition the unit circle $S^1$ into arcs $I_0,\dots,I_{M-1}$ of equal length $2\pi/M$. Define a map
$\kappa:\ZZ_N\to [M]^\Gamma$ as follows: for each $u\in \ZZ_N$ and each $\xi\in \Gamma$, let $\kappa_\xi(u)$ be the
unique index such that $\ep{\xi u}\in I_{\kappa_\xi(u)}$, and set
\[
\kappa(u):=(\kappa_\xi(u))_{\xi\in \Gamma}.
\]
Then the image of $\kappa$ satisfies
\[
\abs{\operatorname{im}(\kappa)}\le M^{\abs{\Gamma}}
\le \left\lceil 2\rho^{-1}\right\rceil^{\nu^{-4}}.
\]
For each $a\in \operatorname{im}(\kappa)$, let $V_a:=\kappa^{-1}(a)$. These sets partition $\ZZ_N$. If $v_2$ is an
in-neighbor of $v_1$ in the induced subgraph $(\ZZ_N,A)[V_a]$, then $v_1-v_2\in A$, and for every
$\xi\in \Gamma$ we also have
\[
\ep{\xi v_1},\ep{\xi v_2}\in I_{a_\xi}
\quad\Longrightarrow\quad
\normT{\xi(v_1-v_2)}\le \frac{2}{M}\le \rho.
\]
Thus $v_1-v_2\in A\cap B$. Since $\abs{A \cap B} < k$, each vertex of $(\ZZ_N,A)[V_a]$ has at most $k-1$
in-neighbors and at most $k-1$ out-neighbors. Therefore the underlying undirected graph has maximum
degree at most $2(k-1)$, so
\[
\chi\bigl((\ZZ_N,A)[V_a]\bigr)\le 2k-1.
\]
Summing over all parts,
\[
\chi\bigl(\ZZ_N,A\bigr)
\le \sum_{a\in \operatorname{im}(\kappa)} \chi\bigl((\ZZ_N,A)[V_a]\bigr)
\le (2k-1)\left\lceil 2\rho^{-1}\right\rceil^{\nu^{-4}}.
\]
Taking $C$ larger than this, which still only depends on $\eps$ and $E$, completes the proof.
\end{proof}

\section{Sidon constructions}\label{sec:sidonconstructions}

Two examples of extremal Sidon sets are 
\begin{equation}\label{eq:sidonex}
    \set{(x,x) \in \FF_p^\times \times \FF_p: x \in \FF_p^\times} \text{ and } \set{(x,x^2) \in \FF_p^2: x \in \FF_p},
\end{equation}
both for prime $p$. These are due to Spence (see \cite{spence}*{Thm.~4.4}) and Erd\H{o}s and Turán \cite{erdosturansidon} respectively. In general, Sidon subsets of size $\Omega\bigp{\sqrt{N}}$ in abelian groups of size $N$ are hard to come by. All known examples seem in some sense to come from the same source, as observed and discussed in \cite{apparentstructure}. This makes our task harder, as all of our constructions must then roughly be of this form unless we are to disprove conjectures in \cite{apparentstructure}.

Before diving in, let us discuss a relevant strengthening of the Sidon property introduced by Lazebnik and Verstraëte in \cite{lazebnikverstraete}. For $N$ coprime to all numbers in $[k]$, $S \subset \ZZ_N$ is \textit{$k$-fold Sidon} if it is free of nontrivial solutions to
$$
a_1 x_1 + a_2 x_2 + a_3 x_3 + a_4 x_4 = 0
$$
whenever $a_1,a_2,a_3,a_4 \in \set{-k,-k+1,\dots,0,\dots,k-1,k}$ and $a_1 + a_2 + a_3 + a_4 = 0$. A $1$-fold Sidon set is just Sidon. \cite{lazebnikverstraete} constructs $2$-fold Sidon sets in $\ZZ_N$ of size $\frac 1 2 \sqrt{N}-3$ for $N = 2^{2^{i+1}}+2^{2^i}+1$ for positive integer $i$. We will use this construction in \Cref{thm:exeq}. It remains an open problem to construct any $3$-fold Sidon subset of $\ZZ_N$ of size $\Omega\bigp{\sqrt{N}}$ for infinitely many $N$. This is indicative of the difficulty of finding large Sidon sets that are also free of other linear equations, and throughout this section we will further be asking for them to generate Cayley graphs with unbounded chromatic number. It seems likely the constructions pursued below are easier to strengthen over non-cyclic abelian groups, but we do not consider other groups here.

\subsection{2-fold Sidon}\label{sec:twofoldcon}

\exeq*

\begin{proof}
    Call $D \subset \ZZ_N$ a \textit{perfect difference} set if every nonzero element of $\ZZ_N$ can be written uniquely as the difference of two elements in $D$. For positive integer $i$ let $t \coloneqq 2^i$, $N \coloneqq 2^{2t} + 2^t + 1$. In \cite{singer}, Singer constructs perfect difference sets $D$ with $D = 2D$ and $\abs{D} = 2^t + 1$ living inside $\ZZ_N$. Lazebnik and Verstraëte then use these in \cite{lazebnikverstraete}*{Thm.~2.5} to construct 2-fold Sidon $S \subset D$ with the property $S \cap 2S = \emptyset$. Explicitly $S$ consists of every other element in the cyclic decomposition of $D$ given by the $\times 2$-map, but the only properties needed from this construction are those stated above. Being $2$-fold Sidon forbids nontrivial solutions to the first and third equations in the theorem, but the same argument in fact forbids solutions to the second. Indeed, consider any solution $x_1,x_2,x_3,x_4 \in S$ to the second equation. Since $2S \subset 2D = D$, it holds that $2x_1 \in D$. Then since $D$ is Sidon, it holds that $\set{2x_1,x_4} = \set{x_2,x_3}$. In particular $2x_1 \in S$, which contradicts $x_1 \in S$ as $S \cap 2S = \emptyset$.

    It remains to verify that $\chi(\ZZ_N,S) = \Omega(N^{1/8})$. By the construction of \cite{lazebnikverstraete}, $\abs{S} \ge 2^{t-1}-2$ and $0 \in D$, so there is
a set \(T\subset \mathbb Z_N\) with \(|T|\le 4\) such that
\[
D\setminus\{0\}\subseteq S\cup 2S\cup T
\]
and $S,2S,T$ are pairwise disjoint. Since multiplication by \(2\) is an automorphism of \(\mathbb Z_N\), the underlying undirected Cayley graphs of $(\ZZ_N,S)$ and $(\ZZ_N,2S)$ are isomorphic. The underlying undirected Cayley graph of $(\ZZ_N,T)$ has degree at most $8$. It follows that
\[
\chi(\ZZ_N,D)\le \chi(\ZZ_N,S)\chi(\ZZ_N,2S)\chi(\ZZ_N,T)\le 9\chi(\ZZ_N,S)^2.
\]
Let us now lower bound $\chi(\ZZ_N,D)$. Denote \(q:=\abs{D \setminus \set{0}}=2^t\), so \(N=q^2+q+1\). Let \(I\) be an independent set in $(\ZZ_N,D)$, and
for \(y\notin I\) let $d_y$ denote the number of edges from $I$ to $y$, that is
\[
d_y:=\bigl|\{x\in I:\ y-x\in D\setminus\{0\}\}\bigr|.
\]
Then \(\sum_{y\notin I} d_y=q|I|\). Note that because $D$ is a perfect difference set every pair of distinct vertices $x,y \in I$ has a unique common out-neighbor outside of $I$. Indeed, $x - y$ has a unique representation as $d_1-d_2$ for $d_1,d_2 \in D$. Their common out-neighbor is then $x+d_2 = y+d_1$, which is outside $I$ since $I$ is an independent set. Using this fact along with convexity,
$$
{\abs{I} \choose 2} = \sum_{y \notin I} {d_y \choose 2} \ge (N-\abs{I}){\frac{q\abs{I}}{N-\abs{I}} \choose 2} = \frac 1 2 \bigp{\frac{q^2\abs{I}^2}{N-\abs{I}} - q\abs{I}}.
$$
Using \(N=q^2+q+1\), after a fair amount of algebra this simplifies to
\[
(|I|-1)^2\le q^3.
\]
Then \(\alpha(\ZZ_N,D)\le q^{3/2}+1=O(N^{3/4})\), and therefore
\[
\chi(\ZZ_N,D)\ge \frac{N}{q^{3/2}+1}=\Omega(N^{1/4}).
\]
    
\end{proof}

\subsection{Parabola}\label{sec:parabolacon}

The parabola construction used for both \Cref{thm:parabola3} and \Cref{thm:parabola4} is inspired by the Erd\H{o}s--Turán construction living in $\FF_p^2$ introduced above \eqref{eq:sidonex}, but compressed to live in a cyclic group. It actually avoids many more equations in addition to $E$, as shown explicitly in the proofs below.

\parabolathree*
\begin{proof}
    Fix any integer $M$ such that $M \ge 5$, $\sum_i \abs{c_i} < M$, and $\prod_i c_i$ divides $M$. Take $p$ to be any prime larger than $M^2$. Define $m \coloneqq Mp+1$ and
    $$
    A \coloneqq \set{(z,z^2) \in \ZZ_m \times \FF_p: 1 \le z \le p-1}.
    $$
    Then if $N \coloneqq mp$, $A \subset \ZZ_{N}$ and $\abs{A} = \Omega_M\bigp{\sqrt{N}}$. Furthermore, $N$ is coprime to $M$ and hence $c_1c_2c_3$.

    To show $A$ is Sidon, consider any violation $x_1,x_2,x_3,x_4 \in [p-1]$ satisfying
    $$
    x_1 + x_2 \equiv x_3 + x_4 \pmod m \quad \text{and} \quad
x_1^2+x_2^2 \equiv x_3^2 + x_4^2 \pmod p
    $$
    with $\set{x_1,x_2} \neq \set{x_3,x_4}$. Since $M \ge 5$, by definition of $m$ there is no torsion in the $\ZZ_m$ coordinate. Thus
    $$
    x_1 + x_2 = x_3 + x_4
    $$
    as integers. But then
    $$
    x_1 + x_2 \equiv x_3 + x_4 \pmod p \quad \text{and} \quad
x_1^2+x_2^2 \equiv x_3^2 + x_4^2 \pmod p.
    $$
    Since $\set{x_1,x_2} \neq \set{x_3,x_4}$, this forms a Sidon violation in the Erd\H{o}s--Turán Sidon construction \eqref{eq:sidonex}, a contradiction.

    To show $A$ is free of solutions to $E$, we will show the stronger statement that $A$ is free of nontrivial solutions to any equation of the form $a_1x_1 + a_2x_2 + a_3 x_3 = 0$ with
    \begin{center}
    \begin{enumerate*}[label=(\textit{\roman*})]
        \item $\abs{a_1} + \abs{a_2} + \abs{a_3} < M$, \quad
        \item $a_1 + a_2 + a_3 = 0$.
    \end{enumerate*}
    \end{center}
    Consider such a solution in $A$, that is $x_1,x_2,x_3 \in [p-1]$ satisfying
    \begin{equation*}\label{eq:collinear}
a_1x_1+a_2x_2+a_3x_3\equiv 0 \pmod m \quad \text{and} \quad
a_1x_1^2+a_2x_2^2+a_3x_3^2\equiv 0 \pmod p.
    \end{equation*}
    Again by definition of $m$ there is no torsion in the first coordinate, and the first equation gives
    \begin{equation}\label{eq:lin3}
        a_1x_1 + a_2x_2 + a_3x_3 = 0
    \end{equation}
    as integers.
If $z:=a_1x_1+a_2x_2$, then the second equation can be rewritten over $\FF_p$ as
\[
z^2+a_1a_2(x_1-x_2)^2 = z^2 \implies a_1a_2(x_1-x_2)^2 = 0,
\]
where we have used that $p > \abs{a_3}$. Since also $p > M^2 > \abs{a_1a_2}$, it follows that \(x_1\equiv x_2 \mod p\). As
\(x_1,x_2\in[p-1]\), this gives \(x_1=x_2\). Substituting into \eqref{eq:lin3} and using
\(a_1+a_2=-a_3\) gives \(x_3=x_1\). Then \(x_1=x_2=x_3\), so the solution is trivial.

    For the chromatic number, we unnecessarily use a point-line incidence bound. For prime $p$, a set $\cP$ of points, and a set $\cL$ of lines in $\FF_p^2$, let $I(\cP,\cL)$ denote the number of point-line incidences between $\cP$ and $\cL$. That is, the number of pairs consisting of $u \in \cP$ and $v \in \cL$ such that $u \in v$. \cite{cillincidence} established that
    \begin{equation}\label{eq:incidence}
        I(\cP,\cL) = \frac{\abs{\cP}\abs{\cL}}{p} + O\bigp{\sqrt{p\abs{\cP}\abs{\cL}}},
    \end{equation}
    with the upper bound having previously been established in \cite{vinszem}.
    Interestingly, Cilleruelo's proof in \cite{cillincidence} actually uses Sidon sets, but our connection to Sidon sets is a bit different (Cilleruelo works with Spence's Sidon construction instead of a parabola). Suppose there exists a proper coloring of $(\ZZ_N,A)$ with $c\sqrt{p}$ colors for some constant $c > 0$. Then restricting this coloring to the vertices
    $$
        (x,y) \in \set{0,1,\dots,p-1} \times \FF_p \subset \ZZ_m \times \FF_p
    $$
    gives an independent subset of size $I = c^{-1}p^{3/2}$. Split this independent set into two parts: $L$ consisting of all points with $x \le t$ and $R$ with $x > t$ where $t$ is chosen so that both are of size at least $I/3$ (recalling that $I = \omega(p)$ and the number of $(x,y)$ with a given $x$ is at most $p$). Then by independence there are no edges between $L$ and $R$ of the form $(x_\ell,y_\ell) \to (x_r,y_r)$ with $y_r-y_\ell = (x_r-x_\ell)^2$ in $\FF_p$, using that $x_r - x_\ell \in \set{1,\dots,p-1}$ so that the difference vector indeed lies in the generating set $A$.

    Let us rewrite
    $$
    y_r-y_\ell = (x_r-x_\ell)^2 \iff y_r - x_r^2 = -2x_\ell x_r + (y_\ell + x_\ell^2).
    $$
    Making the change of coordinates $z_r \coloneqq y_r - x_r^2$, this becomes
    $$
    z_r = -2x_\ell x_r + (y_\ell+x_\ell^2).
    $$
    Then $L$ and $R$ are in bijection with a set of lines $\cL$ and points $\cP$ respectively, defined by
    \begin{align*}
        &\cL \coloneqq \set{z = -2x_\ell x + (y_\ell + x_\ell^2): (x_\ell,y_\ell) \in L},\\
        &\cP \coloneqq \set{(x,z): x = x_r, z = y_r-x_r^2 \text{ for } (x_r,y_r) \in R}.
    \end{align*}
    By definition, the lack of edges between $L$ and $R$ implies that $I(\cP,\cL) = 0$, as any incidence would map to an edge. By \eqref{eq:incidence},
    $$
    \frac{\abs{\cP}\abs{\cL}}{p} = O\bigp{\sqrt{p\abs{\cP}\abs{\cL}}} \implies \frac1 3 I \le \sqrt{\abs{\cP}\abs{\cL}} = O\bigp{p^{3/2}},
    $$
    a contradiction if $c$ is taken small enough. Thus $\chi(\ZZ_N,A) = \Omega\bigp{\sqrt{p}} = \Omega_M(N^{1/4})$.
\end{proof}

\parabolafour*
\begin{proof}
    The construction and proof are essentially identical to that of \Cref{thm:parabola3}, with only a change in parameters and a slight twist in showing solution freeness. Again fix any integer $M$ such that $M \ge 5$, $\sum_i \abs{c_i} < M$, and $\prod_i c_i$ divides $M$. Choose $p$ to be any prime larger than $M^2$ such that $\frac{c_1c_2}{c_3c_4}$ is not a square in $\FF_p$.

    \begin{claim}
        Let $q \in \QQ$ be a non-square. Then there are infinitely many primes $p$ such that $q$ is not a square in $\FF_p$.
    \end{claim}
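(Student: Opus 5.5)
The plan is to reduce to the case of a squarefree integer and then use quadratic reciprocity, through a Dirichlet-type argument, to produce infinitely many primes $p$ for which that integer is a non-residue.

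\textbf{Reduction.} Write $q = \pm a/b$ with $a,b$ positive integers. Multiplying by $b^2$ does not change whether $q$ is a square in $\QQ$, nor in $\FF_p$ for $p \nmid b$. Removing square factors is likewise harmless. So we may assume $q = \varepsilon d$ with $\varepsilon \in \{\pm 1\}$ and $d \ge 1$ squarefree, and we may discard the finitely many primes dividing $2ab$. Since $q$ is a non-square in $\QQ$, either $d > 1$, or $d = 1$ and $\varepsilon = -1$.

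\textbf{Case $q=-1$.} Here $-1$ is a non-square modulo every prime $p \equiv 3 \pmod 4$. There are infinitely many such primes, by Euclid's argument applied to $4\prod p_i - 1$ or by Dirichlet.

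\textbf{Case $d>1$.} Write $d = \ell_1\cdots\ell_r$ with distinct primes $\ell_i$. The Legendre symbol of $q$ at an odd prime $p \nmid d$ is
\[
\Bigl(\tfrac{q}{p}\Bigr) = \Bigl(\tfrac{\varepsilon}{p}\Bigr)\prod_i \Bigl(\tfrac{\ell_i}{p}\Bigr).
\]
By quadratic reciprocity and the supplementary laws, this quantity depends only on $p \bmod 4d$. So it suffices to exhibit one residue class $c \bmod 8d$, coprime to $8d$, on which the symbol equals $-1$; Dirichlet's theorem then gives infinitely many primes in that class.

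\begin{itemize}
\item If $d$ has an odd prime factor $\ell_1$, impose $p \equiv 1 \pmod 8$. Then $\bigl(\tfrac{-1}{p}\bigr) = \bigl(\tfrac{2}{p}\bigr) = 1$ and $\bigl(\tfrac{\ell_i}{p}\bigr) = \bigl(\tfrac{p}{\ell_i}\bigr)$ for odd $\ell_i$. Next choose $p \equiv 1 \pmod{\ell_i}$ for the odd $i \ge 2$, and choose $p$ to be a non-residue modulo $\ell_1$. The Chinese remainder theorem combines these conditions into one class, and on it the product equals $-1$.
\item If $d = 2$, impose $p \equiv 5 \pmod 8$. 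Then $\bigl(\tfrac{\pm 2}{p}\bigr) = -1$.
\end{itemize}

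\textbf{Main obstacle.} The only nontrivial input is Dirichlet's theorem on primes in arithmetic progressions, which can be cited. The reciprocity bookkeeping is routine once the factor $8$ is included in the modulus to fix the signs.

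If we wish to avoid Dirichlet, a softer route is available. If $q$ were a square mod all but finitely many $p$, then the polynomial $x^2 - q$ would split modulo almost all primes. By Chebotarev (or the Frobenius density theorem), this forces $\QQ(\sqrt q) = \QQ$, a contradiction. Citing Dirichlet is the simplest choice.
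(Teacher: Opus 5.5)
Your proof is correct, and it starts the same way as the paper's. For $p \nmid b$ one has $q = ab/b^2$ in $\FF_p$, so everything reduces to a non-square integer; your further normalization to $\varepsilon d$ with $d$ squarefree is harmless.

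The difference is in the integer case. The paper simply cites Ireland--Rosen, Ch.~5, Thm.~3, whose proof there does not use Dirichlet's theorem (which that book only proves much later). Given finitely many primes to avoid, it uses the Chinese remainder theorem to build an odd integer $n$ coprime to them. It imposes essentially your congruence conditions ($n \equiv 1 \pmod 8$, $n\equiv 1$ modulo all but one odd prime factor, a non-residue modulo the last), so that the Jacobi symbol $(a/n)$ equals $-1$ by Jacobi reciprocity. Multiplicativity of the Jacobi symbol then forces some prime factor of $n$ to be a non-residue. The case $a=2$ is handled the same way with $n = 8\ell_1\cdots\ell_k+3$.

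You do the same reciprocity bookkeeping but feed the resulting residue class into Dirichlet's theorem instead of a Euclid-type construction. This makes your argument self-contained apart from Dirichlet, and it gives more: all primes in an explicit progression, hence a positive proportion of primes by the density form of Dirichlet. The cost is a much deeper analytic input than the claim needs, and your Chebotarev alternative is heavier still.

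One point in favor of your write-up is that you treat the sign $\varepsilon$ and the case $q=-1$ explicitly. That case genuinely occurs where the paper applies the claim: for instance $(c_1,c_2,c_3,c_4)=(2,3,-6,1)$ has zero sum and $c_1c_2/(c_3c_4) = -1$.
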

    \begin{proof}
        Let $q = a/b$ in lowest terms. Since $q$ is not a square, the integer $ab$ is also not a square. For every prime $p$ not dividing $b$,
        $$q = \frac{a}{b} = \frac{ab}{b^2}$$
        in $\mathbb{F}_p$, so $q$ is a square mod $p$ if and only if $ab$ is a square mod $p$. Since $ab$ is not a square as an integer, by \cite{ireland}*{Ch.~5, Thm.~3} there are infinitely many primes $p$ for which $ab$ is a quadratic nonresidue mod $p$ (not a square). Discarding the finitely many primes dividing $b$, we get infinitely many for which the reduction of $q$ mod $p$ is well-defined and not a square.
    \end{proof}

     As before, define $m \coloneqq Mp+1$ and
    $$
    A \coloneqq \set{(z,z^2) \in \ZZ_m \times \FF_p: 1 \le z \le p-1}.
    $$
    Then if $N \coloneqq mp$, $A \subset \ZZ_{N}$ and $\abs{A} = \Omega_M\bigp{\sqrt{N}}$. Furthermore, $N$ is coprime to $M$ and hence $c_1c_2c_3c_4$.

    The Sidon property and chromatic number lower bound follow exactly as in the proof of \Cref{thm:parabola3}, as the additional constraint on $p$ has no effect.

    To show $A$ is free of solutions to $E$, we show the stronger statement that $A$ is free of nontrivial solutions to any equation of the form $a_1x_1 + a_2x_2 = a_3x_3 + a_4x_4$ with
    \begin{center}
        \begin{enumerate*}[label=(\textit{\roman*})]
            \item $\abs{a_1} + \abs{a_2} + \abs{a_3} + \abs{a_4} < M$, \quad
            \item $a_1+a_2=a_3+a_4$, \quad
            \item $\frac{a_1a_2}{a_3a_4} = \frac{c_1c_2}{c_3c_4}$.
        \end{enumerate*}
    \end{center}
    Consider such a solution in $A$, that is $x_1,x_2,x_3,x_4 \in [p-1]$ satisfying
    \begin{equation*}\label{eq:modular}
        a_1x_1 + a_2x_2 \equiv a_3x_3 + a_4x_4 \pmod m \quad \text{and} \quad a_1x_1^2 + a_2x_2^2 \equiv a_3x_3^2 + a_4x_4^2 \pmod p.
    \end{equation*}
   Again by definition of $m$ there is no torsion in the first coordinate, and the first equation gives
    \begin{equation}\label{eq:lin}
        a_1x_1 + a_2x_2 = a_3x_3 + a_4x_4
    \end{equation}
    as integers.
    If $z \coloneqq a_1x_1 + a_2x_2$, then the second equation can be rewritten over $\FF_p$ as
    \begin{equation}\label{eq:quad}
    z^2 + a_1a_2(x_1-x_2)^2 = z^2 + a_3a_4(x_3-x_4)^2 \implies \frac{a_1a_2}{a_3a_4}(x_1-x_2)^2 = (x_3-x_4)^2,
    \end{equation}
    where we have used that $p > M^2 > \abs{a_3a_4}$. Since $\frac{c_1c_2}{c_3c_4} = \frac{a_1a_2}{a_3a_4}$ is not a square in $\FF_p$, \eqref{eq:quad} gives that $x_1 \equiv x_2$ and $x_3 \equiv x_4$ mod $p$. But again since they all lie in $[p-1]$ these are equalities. Now combining $a_1 + a_2 = a_3 + a_4$ with $\eqref{eq:lin}$ gives that either $a_1+a_2 = 0$ or $x_1 = x_2 = x_3 = x_4$, so any solution is trivial.
\end{proof}

\subsection{Random sprinkling}\label{sec:sprinklecon}

We turn now to \Cref{thm:twosum}, which constructs large Sidon sets free of any equation with no zero-sum subset of at least three coefficients, in particular non-translation-invariant ones. Given a large Sidon subset $S \subset \ZZ_N$ and some constant $M$ coprime to $N$, it is easy to construct a large Sidon subset in $\ZZ_{NM}$ free of solutions to many non-translation-invariant equations by taking
\begin{equation}\label{eq:sidprod}
    \set{(s,1) \in \ZZ_{N} \times \ZZ_M: s \in S}.
\end{equation}
This set is free of solutions to any equation of the form $\sum_i c_i x_i = 0$ if $\sum_i c_i \neq 0$ and $\sum_i \abs{c_i} < M$. But this generates a Cayley graph with chromatic number at most $3$ simply by taking the $\ZZ_M$ coordinate mod $3$. Thus our goal will be to sprinkle random edges in each level, just enough to break up independent sets but not so many as to create solutions. The difficulty is ensuring these sprinkled elements do not interact with the large Sidon set to create solutions, which is why we are unable to handle arbitrary equations with zero-sum subsets of three or four coefficients.

For positive integer $M$, let $\cL(M)$ denote the collection of equations $\sum_i c_ix_i = 0$ where $\sum_i \abs{c_i} < M$ and there does not exist $I\subseteq [k]$ with $\abs{I}\ge 3$ and $\sum_{i\in I} c_i=0$. The following random construction (the sprinkled edges) gives unbounded chromatic number.

\begin{lemma}\label{lem:randomsidon}
        For integer $M \ge 5$ and $N \ge N_0(M)$, there exists Sidon $R \subset \ZZ_N$ free of nontrivial solutions to any equation in $\cL(M)$. Furthermore $\chi(\ZZ_N, R) = \widetilde{\Omega}(N^{\frac{1}{2M}})$.
    \end{lemma}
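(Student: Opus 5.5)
The plan is the alteration method: take a random subset $R_0\subset\ZZ_N$ with each element included independently with probability $q = N^{-1+\beta}$ for a small exponent $\beta$ to be tuned (roughly $\beta$ just below $1/M$ up to logarithms). From $R_0$ I delete one element of every ``bad'' configuration, namely every nontrivial solution to the Sidon equation and to each equation in $\cL(M)$. The surviving set $R$ is then Sidon and free of nontrivial solutions to all of $\cL(M)$. The chromatic lower bound should come from showing that $R$ still meets the difference set of every large set.

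\textbf{Step 1: counting bad configurations.} Fix an equation $\sum_{i=1}^k c_ix_i=0$ in $\cL(M)$; note $k<M$. A nontrivial solution, after identifying equal coordinates, becomes a solution with $j$ pairwise distinct values $y_1,\dots,y_j$ and merged coefficients $c'_1,\dots,c'_j$. Because the equation has no zero-sum subcollection of size at least $3$, the only merged groups with zero sum are pairs. Nontriviality should therefore leave some merged coefficient nonzero, so the reduced equation is a genuine nonzero linear constraint. I would still need to check the degenerate case in which all nonzero merged coefficients vanish mod $N$; choosing $N$ coprime to everything below $M$, or prime, handles this. A nonzero linear constraint in $j$ distinct variables has at most $N^{j-1}$ solutions, each lying in $R_0$ with probability $q^j$. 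The expected number of bad configurations is therefore at most $\sum_{j\le k} N^{j-1}q^j = N^{-1}\sum_j (Nq)^j = O(N^{-1+k\beta})$. The Sidon equation contributes $N^3q^4=N^{-1+4\beta}$. Since $k\le M-1$, taking $\beta<1/(M-1)$ gives $o(1)$ bad configurations in expectation. This is far more room than needed, so I can afford deleting them outright, and by Markov's inequality the deletion is empty with high probability.

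\textbf{Step 2: chromatic number.} Now $|R|\approx N^{\beta}$, and I want every set $I$ of size $|I|\ge K:=\widetilde{O}(N^{1-\beta/2})$ to span an edge, i.e.\ $(I-I)\cap R\neq\emptyset$. For a fixed $I$, the difference set $I-I$ is large: $|I-I|\ge |I|$ at least, and I need around $N^{1-\beta}\log N$ differences to union-bound over sets. Union-bounding over all $\binom{N}{K}$ sets is too costly, so instead I would use the standard random Cayley graph argument via Fourier analysis / container-type counting. Independence of $I$ means $\sum_{r\in R}\sum_x 1_I(x)1_I(x+r)=0$. Writing $1_I=|I|/N+f$ and using that $R_0$ is random, the quantity $\max_{\xi\ne0}|\widehat{R_0}(\xi)|$ is $O(\sqrt{Nq\log N})$ with high probability by Chernoff plus a union bound over $N$ frequencies. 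Then the expander mixing estimate gives $|I|^2|R|/N\le \max|\widehat R|\,|I|$, so $\alpha\le N\sqrt{\log N}/\sqrt{Nq}$ and $\chi\ge \sqrt{Nq}/\sqrt{\log N}=\widetilde\Omega(N^{\beta/2})$. Deleted elements are few, or none with high probability. With $\beta\approx 1/M$ this yields $\widetilde\Omega(N^{1/(2M)})$, matching the statement, so I would choose $\beta=1/M$, which satisfies $\beta<1/(M-1)$. Symmetrizing $R$ is unnecessary since the chromatic number uses the underlying undirected graph; I use the Fourier bound on $R$ directly, with the edge count $\sum_{r}\langle 1_I,1_I(\cdot+r)\rangle$.

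\textbf{Main obstacle.} The delicate step is Step 1's reduction: verifying that every nontrivial solution, after merging repeated coordinates, really yields a nonzero linear constraint. One must also see why the exclusion of zero-sum subsets of size at least $3$ is exactly what prevents configurations whose probability does not decay, such as coordinates collapsing into groups whose coefficients cancel, which would impose no constraint. If merging produced only zero merged coefficients without being trivial, the count would be $N^{j}q^j$, which is not small. I would first try to show such a collapse forces a partition into zero-sum classes, hence triviality, given $N$ coprime to small integers.
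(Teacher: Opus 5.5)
Your proposal is correct and follows the paper's proof: a random subset of density $q=N^{1/M-1}$, a first-moment count of nontrivial solutions after merging equal coordinates (you apply the paper's counting claim uniformly rather than treating $c(x_1-x_2)=0$ separately), and a spectral bound for $\chi$ that you derive directly via Chernoff and the Fourier/expander-mixing estimate instead of citing \cite{cayleycoloring}*{Prop.~2.7}. Your ``main obstacle'' does not actually arise: if every merged coefficient vanished, the value classes would themselves witness triviality, so nontriviality alone gives a merged coefficient $d$ with $0<\abs{d}<M$. Such a $d$ is nonzero mod $N$ once $N\ge M$ and leaves at most $\gcd(d,N)\le\abs{d}$ choices for its variable. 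Hence the no-zero-sum hypothesis defining $\cL(M)$ is not needed for this lemma (it matters only for the $P$--$Q$ interaction in \Cref{thm:twosum}). You should also not restrict to prime $N$ or $N$ coprime to small integers: that would not prove the lemma as stated, and the lemma is applied with $N=r\equiv 1 \pmod{Mc}$, which need not be either.
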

    \begin{proof}
     Consider sampling $R$ by including each element with probability $q = N^{\frac 1 M - 1}$ independently. We argue that with high probability as $N \to \infty$ the desired properties hold. $N_0(M)$ may then be defined as the smallest such that these properties hold with probability at least $1/2$, say, for all $N \ge N_0(M)$.
     
     We first establish that $R$ is free of nontrivial solutions to any equation in $\cL = \cL(M)$. Note that any equation in $\cL$ that is translation invariant has the form $c(x_1-x_2) = 0$. After choosing $x_1$, there are then $O_{c}(1)$ choices for $x_2$, so the expected number of nontrivial solutions to such an equation in $R$ is $O_{c}(Nq^2) = O_c\bigp{N^{\frac 2 M - 1}} = o(1)$. Considering all possible values of $c$ for equations in $\cL$ gives $O_M(Nq^2) = o(1)$ expected nontrivial solutions, so with high probability $R$ is free of all of them. It thus suffices only to consider equations in $\cL$ that are not translation invariant.
    \begin{claim}\label{claim:transinvar}
        Consider an equation
            $$
            E: \sum_{i=1}^L c_i x_i = 0
            $$
        with $\sum_i c_i \neq 0$ and integer $k \le L$. The number of solutions $x_1,\dots,x_L \in \ZZ_N$ with exactly $k$ distinct variables is $O_{E}(N^{k-1})$.
    \end{claim}
    \begin{proof}
        For a solution $x_1,\dots,x_L$ with $k$ distinct, we may add the coefficients corresponding to equal variables to yield a new equation
        $$
        \sum_{i=1}^k d_i y_i = 0,
        $$
        where $y_1,\dots,y_k$ are distinct and $\sum_i d_i = \sum_i c_i \neq 0$. In particular one of the $d_i$ is nonzero. Assume without loss of generality it is $d_1$. Then after choosing $y_2,\dots,y_k$ ($N^{k-1}$ choices), there are $O_{d_1}(1)$ choices for $y_1$. Multiplying by the $O_L(1)$ ordered tuples $x_1,\dots,x_L$ with the same distinct $y_1,\dots,y_k$ gives the result.
    \end{proof}
    Consider $E \in \cL$ that is not translation invariant. Using the claim, where $L < M$, the expected number of solutions to $E$ in $R$ with $k$ distinct variables is at most
    $$
    O_{E}\bigp{N^{k-1}q^k} = O_{E}\bigp{N^{k-1 + k\bigp{\frac 1 M - 1}}} = O_{E}\bigp{N^{\frac k M - 1}} = O_{M}\bigp{N^{\frac{M-1}{M} - 1}}.
    $$
    Thus the expected number of solutions to $E$ is $O_M\bigp{N^{\frac{M-1}{M} - 1}}$, and so is the expected number of overall solutions to equations in $\cL$ that are not translation invariant. This vanishes as $N \to \infty$, so with high probability $R$ is free of all of these.

    Let us now establish the Sidon property. Note that any solution to $x_1 + x_2 = x_3 + x_4$ violating the Sidon property must have a nonzero coefficient after combining equal variables, like in \Cref{claim:transinvar}. By the same argument there are $O(N^{k-1})$ such solutions with $k$ distinct variables for $k \in [4]$. Thus the expected number surviving in $R$ is $O(N^{k-1}q^k) = o(1)$ for each such $k$ (using that $M \ge 5$), and with high probability $R$ is free of all Sidon violations.

    Finally, the fact that $\chi(\ZZ_N, R) = \widetilde{\Omega}(N^{\frac{1}{2M}})$ with high probability follows from standard spectral estimates for random Cayley graphs and Hoffman's ratio bound on the size of the largest independent set. See \cite{cayleycoloring}*{Prop.~2.7} for a statement that directly gives the desired bound.
    \end{proof}

Combining \eqref{eq:sidprod} with the lemma yields both near-maximal size and unbounded chromatic number. Just as in Theorems \ref{thm:parabola3} and \ref{thm:parabola4}, our construction actually avoids many more equations beyond $E$, as explicitly proved below.

\twosum*

\begin{proof}
    Denote $c \coloneqq \abs{\prod_i c_i}$. Fix any integer $M$ such that $M \ge 5$, $\sum_i \abs{c_i} < M$, and $(M,c) = 1$. Denote $d \coloneqq Mc$. Then since $1+d$ is coprime to $d^2$, by Dirichlet's theorem there exist infinitely many primes $p$ such that
    $$
    p \equiv 1+d \mod d^2 \implies p-1 = dr \text{ for } r \equiv 1 \mod d.
    $$
    Fix any such $p,r$ with $r = \frac{p-1}{d} \ge \max\bigp{M,N_0(M)}$, where $N_0(M)$ is as in \Cref{lem:randomsidon}.

    Let $R \subseteq \ZZ_{r}$ be as in \Cref{lem:randomsidon}. Since $r$ divides $p-1$, $\ZZ_r$ is isomorphic to a subgroup $H \le \FF_p^\times$. Fix any such isomorphism $\phi: \ZZ_r \to H \subset [p-1]$. Define
    $$
    P \coloneqq \set{(x,\phi(x),1) \in \ZZ_r \times \FF_p \times \ZZ_M: x \in \ZZ_r}, \quad Q \coloneqq \set{(u,0,0) \in \ZZ_r \times \FF_p \times \ZZ_M: u \in R}.
    $$
    Since $r \equiv 1 \mod d$ and $p > r \ge M$, it holds that $r,p,M$ are pairwise coprime and $N \coloneqq Mpr$ is coprime to $c$. Then $P \cup Q$ lives in the cyclic group $\ZZ_N$ for $N$ coprime to $c_1 \cdots c_k$ as desired. Furthermore,
    $$
    \abs{P \cup Q} \ge \abs{P} = r = \frac{\sqrt{r}}{\sqrt{Mp}}\sqrt{N} = \frac{\sqrt{p-1}}{\sqrt{dMp}}\sqrt{N} = \Omega_E\bigp{\sqrt{N}}.
    $$
    Note also that 
    $$
    \chi(\ZZ_N,P \cup Q) \ge \chi(\ZZ_r,R) = \widetilde{\Omega}\bigp{r^{\frac{1}{2M}}} = \widetilde{\Omega}_E\bigp{N^{\frac{1}{4M}}} = N^{\Omega_E(1)},
    $$ 
    since $(\ZZ_r,R)$ is induced on the vertex subset $(\ZZ_r,0,0)$ by $Q$. It thus remains to check that $P \cup Q$ is Sidon and free of solutions to $E$. We will in fact show that $P \cup Q$ is free of nontrivial solutions to any equation in $\cL = \cL(M)$, which includes $E$ by definition of $M$.

    To show that $P \cup Q$ is Sidon, consider any quadruple
    $$
    x_1 + x_2 = x_3 + x_4.
    $$
    Considering the third coordinate, both sides must have the same number of variables in $P$ and the same number in $Q$ ($M \ge 5$ so there is no torsion).
    
    If all variables lie in $P$ and form a Sidon violation, then their projections onto the first two coordinates form a Sidon violation in
    \begin{align*}
        \set{(x,\phi(x)) \in \ZZ_r \times \FF_p: x \in \ZZ_r} &\cong \set{(\phi(x),\phi(x)) \in H \times \FF_p: x \in \ZZ_r} \\
        & = \set{(z,z) \in H \times \FF_p: z \in H} \\
        &\subset \set{(z,z) \in \FF_p^\times \times \FF_p: z \in \FF_p^\times},
    \end{align*}
    where we have used that $H$ is a subgroup of $\FF_p^\times$. But this containing set is known to be Sidon \eqref{eq:sidonex}, a contradiction.

    Similarly, if all variables of a Sidon violation lie in $Q$ then their projection onto the first coordinate forms a Sidon violation in $R$. But $R$ is also Sidon.

    If each side has one variable in $P$ and one in $Q$, then looking at the second coordinate of the variables in $P$, they must be equal. Thus the $Q$ variables are equal as well, and the Sidon property holds.

   Consider now any equation $F: \sum_{i=1}^L c_i x_i = 0 \in \cL$. Consider any nontrivial solution $x_1,\dots,x_L \in P \cup Q$ to $F$, and let $I \subset [L]$ be the indices $i$ for which $x_i \in P$. We will consider two cases and derive a contradiction in each, showing that no such solution can exist.

   \textbf{Case $I = \emptyset$.} Then this solution lies entirely in $Q$, and is forbidden by considering the first coordinate and properties of $R$.

   \textbf{Case $I \neq \emptyset$.} Recalling that $\sum_i \abs{c_i} < M$, so there is no torsion in the $\ZZ_M$ coordinate, it follows that
   $$
   \sum_{i \in I} c_i = 0,
   $$
   and thus by assumption that $F \in \cL$ that $\abs{I} = 2$. Without loss of generality assume $I = \set{1,2}$ so that $c_2 = -c_1$. If $L=2$ then clearly $x_1 = x_2$ and any solution is trivial, so assume $L > 2$.
   
   We first argue that $x_1 = x_2$ always. Since the second coordinates of $x_3,\dots,x_L \in Q$ are all zero, looking at the second coordinate of
   $$
   \sum_{i=1}^L c_i x_i = 0,
   $$
   it follows that the second coordinate of $x_1$ equals the second coordinate of $x_2$. Then $x_1 = x_2$ by definition of $P$.
   
   Since $c_2 = -c_1$, it then follows that
   $$
   \sum_{i =3}^L c_i y_i = 0,
   $$
   where $y_i \in R$. Note that $\sum_{i =3}^L c_i \neq 0$, as otherwise all of $[L]$ would be a zero-sum subset of more than two coefficients. Thus any solution to this subequation in $R$ is nontrivial, contradicting \Cref{lem:randomsidon} as the subequation lies in $\cL$.

   \end{proof}

\subsection*{Acknowledgements}

I would like to thank Jacob Fox for pointing out the proof of the lower bound on $\chi(\ZZ_N,D)$ in \Cref{thm:exeq}, which I had originally proved with spectral methods, and Sarah Peluse for helpful discussions.

\bibliographystyle{amsplain}
\bibliography{Biblio.bib}

\end{document}